\documentclass{amsart}
\usepackage{cmap}
\usepackage{amsmath}
\usepackage{amssymb}
\usepackage{amscd}
\usepackage[all]{xy}
\usepackage{color}
\usepackage{graphicx}
\usepackage{fancyhdr}
\usepackage{stmaryrd}
\usepackage[pagebackref]{hyperref}
\renewcommand*{\backref}[1]{}\renewcommand*{\backrefalt}[4]{\ifcase #1 (\tt not cited)\or (\tt cited on page~#2)\else (\tt cited on pages~#2)\fi}

\theoremstyle{plain}
\newtheorem{theorem}{Theorem}[section]
\newtheorem{proposition}[theorem]{Proposition}
\newtheorem{lemma}[theorem]{Lemma}
\newtheorem{corollary}[theorem]{Corollary}

\newtheorem{hypothesis}[theorem]{Hypothesis}

\theoremstyle{question}

\theoremstyle{example}
\newtheorem{example}[theorem]{Example}

\theoremstyle{remark}

\def\bc{\begin{center}}
\def\ec{\end{center}}

\def\ker{{\rm ker}}

\def\Spec{{\rm Spec}}

\def\Max{{\rm Max}}

\def\Ext{{\rm Ext}}
\def\Tor{{\rm Tor}}

\def\pd{{\rm pd}}
\def\id{{\rm id}}
\def\fd{{\rm fd}}

\def\fPD{{\rm fPD}}

\def\FID{{\rm FID}}
\def\FFD{{\rm FFD}}

\def\FPR{\mathcal{FPR}}

\newcommand{\FT}{\operatorname{FT}}

\begin{document}
\title[Change-of-Rings Theorems for the Small Finitistic Dimension] {Change-of-Rings Theorems for the Small Finitistic Dimension}

\author [Xiong] {Tao Xiong}
\address{(Xiong) Department of Mathematics, Sichuan University of Arts and Science, Dazhou 635000, PR China}
\email{Taoxiong2004@163.com}
\author[El Haddaoui]{Younes El Haddaoui}
\address{(El Haddaoui)  Laboratory of Modelling and Mathematical Structures \\Department of Mathematics, Faculty of Science and Technology of Fez, Box 2202, University S.M. Ben Abdellah Fez, Morocco.}
\email{younes.elhaddaoui@usmba.ac.ma}
\author [Kim] {Hwankoo Kim$^{\dag}$}
\address{(Kim) Division of Computer \& Information Engineering, Hoseo University, Asan 31499, Republic of Korea}
\email{hkkim@hoseo.edu}
\author [Zhou] {Qiang Zhou}
\address{(Zhou) School of Finance and Economics Management, Sichuan University of Arts and Science, Dazhou 635000, PR China}
\email{genius785278@163.com}

\thanks{Key Words: $\FT$-flat dimension; small finitistic dimension; change-of-rings theorem.}


\thanks{$2010$ Mathematics Subject Classification: Primary 13D05; Secondary 13A15, 13D07, 16S50}

\thanks{$\dag$ Corresponding Author}

\date{\today}

\begin{abstract}
In this paper, we study the small finitistic dimension of a commutative ring from the
viewpoint of finitistic flat homological algebra. Using the class $\FPR(R)$ of modules
admitting finite projective resolutions, we investigate the finitistic flat
($\FT$-flat) dimension and establish several of its basic properties. We prove
change-of-rings results for the $\FT$-flat dimension, including quotient and polynomial
extension results, as well as localization inequalities. As applications, we obtain
characterizations of the small finitistic dimension in terms of $\FT$-flat dimension,
derive quotient and polynomial extension theorems for the small finitistic dimension,
and establish local upper bounds in terms of the small finitistic dimensions of localizations.
\end{abstract}

\maketitle

\section{Introduction}

Throughout this paper, $R$ denotes a commutative ring, unless otherwise specified.
For an $R$-module $M$, we write $\pd_R M$ and $\fd_R M$ for the projective dimension
and the flat dimension of $M$, respectively. We denote by $T(M)$ the torsion submodule
of $M$, namely
\[
T(M)=\{\,x\in M \mid \text{there exists a nonzerodivisor } s\in R \text{ such that } sx=0\,\}.
\]
We also write $\Max(R)$ (resp.\ $\Spec(R)$) for the set of maximal (resp.\ prime) ideals of $R$.

Let $M$ be an $R$-module. We say that $M$ admits a finite projective resolution,
and write $M\in \FPR(R)$, if there exists an exact sequence
\[
0\longrightarrow P_n\longrightarrow P_{n-1}\longrightarrow \cdots
\longrightarrow P_1\longrightarrow P_0\longrightarrow M\longrightarrow 0,
\]
where each $P_i$ is a finitely generated projective $R$-module. Thus, $\FPR(R)$ denotes
the class of all $R$-modules admitting finite projective resolutions by finitely generated
projective modules. In particular, if $M\in \FPR(R)$, then $\pd_R M=\fd_R M<\infty$.

The invariant
\[
\fPD(R)=\sup\{\,\pd_R M \mid \text{$M$ is a finitely generated $R$-module and }
\pd_R M<\infty\,\}
\]
was introduced by Bass in~\cite{Ba60} and is called the \emph{small finitistic dimension}
of $R$. Since over an arbitrary ring the syzygies of finitely generated modules need not
be finitely generated, this definition is often inconvenient in practice. To remedy this,
Glaz redefined the small finitistic dimension in~\cite{Gl-C89} by setting
\[
\fPD(R)=\sup\{\,\pd_R M \mid M\in \FPR(R)\,\}.
\]
Unless otherwise stated, the small finitistic dimension $\fPD(R)$ considered throughout this paper
is understood in the sense of Glaz's modified definition.

Recently, Wang and Zhang have studied the small finitistic dimension of commutative rings in \cite{zhang2024,zhang2026, zhangwang2023}.

It follows immediately from the definition that $\fPD(R)=0$ if and only if every module
in $\FPR(R)$ is projective. Equivalently, whenever there exists a short exact sequence
\[
0\longrightarrow P\longrightarrow F\longrightarrow M\longrightarrow 0,
\]
with both $P$ and $F$ finitely generated projective $R$-modules, the module $M$ must itself
be finitely generated projective.

In \cite{WZKXS2020}, an $R$-module $M$ is said to be \emph{finitistic flat}, or simply
\emph{$\FT$-flat}, if $\Tor_1^R(N,M)=0$ for every $N\in \FPR(R)$. The \emph{$\FT$-flat
dimension} of $M$ is said to be at most $n$, denoted by $\FT\text{-}\fd_R M\le n$, if there
exists an exact sequence
\[
0\longrightarrow P_n\longrightarrow P_{n-1}\longrightarrow \cdots
\longrightarrow P_1\longrightarrow P_0\longrightarrow M\longrightarrow 0,
\]
in which each $P_i$ is an $\FT$-flat $R$-module. If no such integer $n$ exists, we write
$\FT\text{-}\fd_R M=\infty$. The \emph{$\FT$-dimension} of a ring $R$ is then defined by
\[
\FT\text{-}\dim(R)=\sup\{\,\FT\text{-}\fd_R M \mid M \text{ is an $R$-module}\,\}.
\]
It was shown in \cite{WZKXS2020} that $\fPD(R)\le n$ if and only if
$\FT\text{-}\dim(R)\le n$. Thus, the invariant $\fPD(R)$ may also be interpreted in terms
of finitistic flat dimension.

In classical homological algebra, both the global dimension and the weak global dimension
satisfy well-known change-of-rings theorems. For instance, for any commutative ring $R$,
one has
\[
w.\!gl.\!\dim(R)=\sup\{\,w.\!gl.\!\dim(R_{\mathfrak m}) \mid \mathfrak m\in \Max(R)\,\}.
\]
Moreover, if $R$ is Noetherian, then
\[
gl.\!\dim(R)=\sup\{\,gl.\!\dim(R_{\mathfrak m}) \mid \mathfrak m\in \Max(R)\,\},
\]
which is the classical localization theorem. The quotient theorem asserts that if
$\overline{R}=R/aR$, where $a\in R$ is neither a zerodivisor nor a unit, then whenever
$gl.\!\dim(\overline{R})<\infty$ and $w.\!gl.\!\dim(\overline{R})<\infty$, one has
\[
gl.\!\dim(R)\ge gl.\!\dim(\overline{R})+1
\quad\text{and}\quad
w.\!gl.\!\dim(R)\ge w.\!gl.\!\dim(\overline{R})+1.
\]
Likewise, the polynomial ring theorem states that for any ring $R$ and indeterminates
$X_1,\ldots,X_n$ over $R$,
\[
gl.\!\dim(R[X_1,\ldots,X_n])=gl.\!\dim(R)+n
\quad\text{and}\quad
w.\!gl.\!\dim(R[X_1,\ldots,X_n])=w.\!gl.\!\dim(R)+n.
\]

Bass also introduced in~\cite{Ba60} the finitistic injective dimension
\[
\FID(R)=\sup\{\,\id_R M \mid M \text{ is an $R$-module and } \id_R M<\infty\,\}
\]
and the finitistic flat dimension
\[
\FFD(R)=\sup\{\,\fd_R M \mid M \text{ is an $R$-module and } \fd_R M<\infty\,\}.
\]
Change-of-rings theorems for these two homological dimensions were established
in \cite{X2020,X2023}. In contrast, as far as we know, no analogous change-of-rings
theorem has yet been established for the small finitistic dimension. Motivated by this,
the aim of this paper is twofold. First, we study structural properties of the class
$\FPR(R)$. Second, using these properties, we establish change-of-rings theorems for the
small finitistic dimension of commutative rings.

\section{Change-of-Rings Theorems for the $\FT$-Flat Dimension}

Let $S$ be a multiplicative subset of a ring $R$. Every module in the class
$\FPR(R_S)$ is naturally an $R$-module via restriction of scalars. The following
example shows that, in general, an $R_S$-module $L\in \FPR(R_S)$ need not belong
to $\FPR(R)$.

\begin{example}\label{ex:localization-not-FPR}\rm
\textup{(1)} Let $\mathbb{Z}$ be the ring of integers, and let
$\mathbb{Q}=\mathbb{Z}_S$ be the field of rational numbers, where
$S=\mathbb{Z}\setminus\{0\}$. Then $\mathbb{Q}$, viewed as a $\mathbb{Q}$-module,
belongs to $\FPR(\mathbb{Q})$. However, $\mathbb{Q}$ is not finitely generated as a
$\mathbb{Z}$-module. Since every module in $\FPR(\mathbb{Z})$ admits a finite
projective resolution by finitely generated projective $\mathbb{Z}$-modules, it must
itself be finitely generated. Therefore $\mathbb{Q}\notin \FPR(\mathbb{Z})$.

\medskip

\textup{(2)} Let $k$ be a field, and set
$R:=k[x,y],$ $S:=\{1,x,x^2,\dots\}.$
Then
$R_S\cong k[x,x^{-1},y],$
which is not a field. Consider the $R_S$-module
$L:=R_S/(y)\cong k[x,x^{-1}],$
which is cyclic, and hence finitely generated, over $R_S$. Moreover, $L$ admits a
finite projective resolution over $R_S$:
\[
0\longrightarrow R_S \xrightarrow{\cdot y} R_S \longrightarrow R_S/(y)\longrightarrow 0.
\]
Thus $L\in \FPR(R_S)$ and $\pd_{R_S}(L)=1$.

Now view $L$ as an $R$-module via restriction of scalars along the canonical map
$R\to R_S$. Then
\[
L\cong R_S/(y)\cong (R/(y))_S\cong k[x,y]/(y)[x^{-1}]\cong k[x,x^{-1}].
\]
We claim that $L$ is not finitely generated as an $R$-module. Indeed, if
$k[x,x^{-1}]$ were finitely generated over $k[x,y]$, then it would also be finitely
generated over the subring $k[x]$. But this is impossible, since
$k[x,x^{-1}]$ contains arbitrarily high negative powers of $x$, and no finite set of
elements can generate all $x^{-n}$ over $k[x]$. Hence $L\notin \FPR(R)$.

Therefore, an $R_S$-module $L\in \FPR(R_S)$ need not belong to $\FPR(R)$.
\end{example}

\begin{lemma}\label{lem:FPR-basic}
Let $R$ be a ring.
\begin{enumerate}
\item[(1)] Let
\[
0\longrightarrow A\longrightarrow B\longrightarrow C\longrightarrow 0
\]
be a short exact sequence of $R$-modules. Then:
\begin{enumerate}
\item[(a)] If $B$ is a finitely generated projective $R$-module, then
$A\in \FPR(R)$ if and only if $C\in \FPR(R)$.
\item[(b)] If $A,C\in \FPR(R)$, then $B\in \FPR(R)$.
\item[(c)] If $A,B\in \FPR(R)$, then $C\in \FPR(R)$.
\item[(d)] If $B,C\in \FPR(R)$, then $A\in \FPR(R)$.
\end{enumerate}
\item[(2)] Let
\[
0\longrightarrow K\longrightarrow P_n\longrightarrow P_{n-1}\longrightarrow \cdots
\longrightarrow P_1\longrightarrow P_0\longrightarrow M\longrightarrow 0
\]
be an exact sequence of $R$-modules, where each $P_i\in \FPR(R)$. Then
$M\in \FPR(R)$ if and only if $K\in \FPR(R)$.
\end{enumerate}
\end{lemma}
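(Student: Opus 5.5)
The plan is to reduce everything to standard manipulations of finite resolutions by finitely generated projectives: horseshoe lemma, mapping cones, and Schanuel-type arguments. A useful reformulation is that $M\in\FPR(R)$ iff $M$ has a resolution $\cdots\to P_1\to P_0\to M\to 0$ by finitely generated projectives whose some syzygy is projective. Indeed, if a syzygy $K_m$ with finitely generated projective $P_{m-1}\to K_m$ is projective, then $K_m$ is a direct summand of a finitely generated module, hence finitely generated projective. Combined with Schanuel's lemma for the "finitely generated projective" version, this shows that if $M\in\FPR(R)$ and $0\to K\to P\to M\to 0$ with $P$ finitely generated projective, then $K\in\FPR(R)$. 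The point is that $M$ is finitely presented, so $K$ is finitely generated; and $K\oplus P'_0\cong K'\oplus P$, where $K'$ is the first syzygy of a finite resolution of $M$.

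For (1)(a), the direction $C\in\FPR\Rightarrow A\in\FPR$ is exactly the syzygy statement just described. I will also record that $\FPR(R)$ is closed under adding finitely generated projective summands, and under removing them, via the same argument. The converse is immediate: splice a finite resolution of $A$ with $A\to B$.

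For (b), I apply the horseshoe lemma to finite resolutions of $A$ and $C$; this gives a finite resolution of $B$ by the direct sums, which are finitely generated projective.

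For (c), I first use (a) to reduce the length of $B$'s resolution, by induction on the length of a resolution of $B$. If $B=P$ is finitely generated projective, then $C\in\FPR$ by (a). In general, take $0\to B'\to P\to B\to0$ with $B'\in\FPR$ of shorter length. Pull back along $P\to B\to C$ to get $0\to A'\to P\to C\to 0$ with $0\to B'\to A'\to A\to0$. By (b), $A'\in\FPR$, and then (a) gives $C\in\FPR$. So (c) in fact needs no induction.

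For (d), take $0\to C'\to Q\to C\to 0$ with $Q$ finitely generated projective; by (a), $C'\in\FPR$. Form the pullback $E=B\times_C Q$. This gives $0\to C'\to E\to B\to 0$ and $0\to A\to E\to Q\to 0$; the latter splits, so $E\cong A\oplus Q$. By (b), $E\in\FPR$. Finally $A$ is obtained from $0\to Q\to E\to A\to 0$ (the split sequence), so by (c), $A\in\FPR$.

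For (2), I induct on $n$ by splitting the long exact sequence into short exact sequences $0\to K_{i+1}\to P_i\to K_i\to 0$ with $K_0=M$ and $K_{n+1}=K$. If $M\in\FPR$, then (d) applied successively gives each $K_i\in\FPR$, so $K\in\FPR$. Conversely, if $K\in\FPR$, then (c) applied from the top down gives $M\in\FPR$.

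The main obstacle I expect is the careful finitely generated version of Schanuel in (a), namely ensuring the syzygy $K$ is finitely generated and has a finite resolution. This is handled by finite presentation and the direct-sum isomorphism, after which $K\oplus P_0'\in\FPR$; removing the projective summand $P_0'$ again uses $0\to P_0'\to K\oplus P_0'\to K\to 0$ split, together with the $C\Rightarrow$ syzygy direction applied to a resolution of $K\oplus P_0'$ that begins with $K\oplus P_0'$ mapping onto $K$. Care is needed to avoid circularity here: I will prove "projective summand removal" directly. If $X\oplus P\in\FPR$ with $P$ finitely generated projective, then its resolution $F_\bullet$ minus the complex $P\xrightarrow{=}P$ in degrees $0,1$ (adding $P$ to $F_1$) gives a resolution of $X$.
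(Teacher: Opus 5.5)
Your proof is correct, but it takes a different route from the paper in parts (a) and (d); parts (b), (c) and (2) are essentially the paper's argument. In (c), your $A'$ is exactly the paper's pullback $X=P\times_B A$. In (2), you correctly use (d) going down and (c) going up; the paper's text cites (b) and (d), but (c) is what the upward direction actually needs.

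\textbf{Part (a).} For the hard direction, the paper proceeds in three steps:
\begin{enumerate}
\item it first argues that $A$ is finitely generated;
\item it applies the Horseshoe Lemma to $0\to A\to B\to C\to 0$ with covers $P\to A$ and $Q_0\to C$, getting $0\to K\to X\to Q_1'\to 0$ with $X$ a summand of $P\oplus Q_0$;
\item it concludes by an informal induction (``repeated application of the same argument'') on the length of the resolution of $C$.
\end{enumerate}
You instead use Schanuel's lemma, $K\oplus P_0'\cong K'\oplus P$, together with closure of $\FPR(R)$ under adding and removing finitely generated projective summands. The removal is done by the explicit resolution $\cdots\to F_2\to F_1\oplus P\to F_0\to X\to 0$, which checks out. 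This route is non-inductive, and finite generation of $A$ comes for free instead of being a separate step.

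\textbf{Part (d).} The paper pulls back along a cover $P\to B$, uses the hard direction of (a) twice (for $K$ and for $X$), and then uses (c). You pull back along a cover $Q\to C$. The sequence $0\to A\to E\to Q\to 0$ splits, so $E=B\times_C Q\cong A\oplus Q$. You get $E\in\FPR(R)$ from (b), after $C'\in\FPR(R)$ by (a). Then $A$ is recovered by (c), or directly by your summand-removal lemma.

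\textbf{What each route buys.} The paper's approach uses one tool, the Horseshoe Lemma, uniformly for (a) and (b). Yours isolates two reusable facts: Schanuel's lemma and cancellation of finitely generated projective summands inside $\FPR(R)$. This makes the logical dependencies among (a)--(d) cleaner and avoids the unstated induction in the paper's proof of (a).
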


\begin{proof}
\textup{(1)(a)} Assume first that $A\in \FPR(R)$. Then there exists an exact sequence
\[
0\longrightarrow Q_n\longrightarrow Q_{n-1}\longrightarrow \cdots
\longrightarrow Q_0\longrightarrow A\longrightarrow 0
\]
with each $Q_i$ finitely generated projective. Splicing this with
$0\longrightarrow A\longrightarrow B\longrightarrow C\longrightarrow 0,$
we obtain an exact sequence
\[
0\longrightarrow Q_n\longrightarrow Q_{n-1}\longrightarrow \cdots
\longrightarrow Q_0\longrightarrow B\longrightarrow C\longrightarrow 0.
\]
Since $B$ is finitely generated projective, this shows that $C\in \FPR(R)$.

Conversely, assume that $C\in \FPR(R)$. Then there exists an exact sequence
\[
0\longrightarrow Q_m\longrightarrow Q_{m-1}\longrightarrow \cdots
\longrightarrow Q_0\longrightarrow C\longrightarrow 0
\]
with each $Q_i$ finitely generated projective. Because $B$ is finitely generated
projective, in particular finitely presented, and $C$ is finitely presented, it follows
from the exact sequence
$0\longrightarrow A\longrightarrow B\longrightarrow C\longrightarrow 0$
that $A$ is finitely generated. Choose a surjection $P\to A$ with $P$ finitely generated
projective, and let $K=\ker(P\to A)$. Applying the Horseshoe Lemma to
\[
0\longrightarrow A\longrightarrow B\longrightarrow C\longrightarrow 0
\]
and
\[
0\longrightarrow K\longrightarrow P\longrightarrow A\longrightarrow 0,
\qquad
0\longrightarrow Q_1'\longrightarrow Q_0\longrightarrow C\longrightarrow 0,
\]
we obtain a short exact sequence
$0\longrightarrow K\longrightarrow X\longrightarrow Q_1'\longrightarrow 0$
with $X$ finitely generated projective. Since $Q_1'\in \FPR(R)$ by truncating the chosen
finite projective resolution of $C$, repeated application of the same argument yields
$K\in \FPR(R)$. Hence $A\in \FPR(R)$.

\textup{(1)(b)} Let
\[
0\longrightarrow P_n\longrightarrow \cdots\longrightarrow P_0\longrightarrow A\longrightarrow 0
\]
and
\[
0\longrightarrow Q_m\longrightarrow \cdots\longrightarrow Q_0\longrightarrow C\longrightarrow 0
\]
be finite projective resolutions of $A$ and $C$, respectively, with all $P_i,Q_j$
finitely generated projective. By the Horseshoe Lemma, the short exact sequence
$0\longrightarrow A\longrightarrow B\longrightarrow C\longrightarrow 0$
induces a finite projective resolution of $B$ by finitely generated projective modules.
Thus $B\in \FPR(R)$.

\textup{(1)(c)} Since $A,B\in \FPR(R)$, both $A$ and $B$ are finitely generated, and hence
so is $C$. Choose a short exact sequence
\[
0\longrightarrow K\longrightarrow P\longrightarrow B\longrightarrow 0
\]
with $P$ finitely generated projective. Form the pullback $X:=P\times_B A$. Then we have
a commutative diagram with exact rows and columns
\[
\xymatrix@R=16pt@C=18pt{
0\ar[r]&K\ar[r]\ar@{=}[d]&X\ar[r]\ar[d]&A\ar[r]\ar[d]&0\\
0\ar[r]&K\ar[r]&P\ar[r]\ar[d]&B\ar[r]\ar[d]&0\\
&&C\ar@{=}[r]\ar[d]&C\ar[d]\\
&&0&0
}
\]
Since $B\in \FPR(R)$ and $P$ is finitely generated projective, part \textup{(a)} yields
$K\in \FPR(R)$. Since $A\in \FPR(R)$ and
$0\longrightarrow K\longrightarrow X\longrightarrow A\longrightarrow 0$
is exact, part \textup{(b)} implies that $X\in \FPR(R)$. Now applying part \textup{(a)} to
$0\longrightarrow X\longrightarrow P\longrightarrow C\longrightarrow 0,$
we conclude that $C\in \FPR(R)$.

\medskip
\textup{(1)(d)} Since $B,C\in \FPR(R)$, both are finitely generated, and therefore $A$ is
finitely generated. Choose a short exact sequence
$0\longrightarrow K\longrightarrow P\longrightarrow B\longrightarrow 0$
with $P$ finitely generated projective, and form the pullback diagram as in \textup{(c)}.
By part \textup{(a)}, $K\in \FPR(R)$. Also, since $C\in \FPR(R)$ and
$0\longrightarrow X\longrightarrow P\longrightarrow C\longrightarrow 0$
is exact with $P$ finitely generated projective, part \textup{(a)} gives
$X\in \FPR(R)$. Finally, from the exact sequence
$0\longrightarrow K\longrightarrow X\longrightarrow A\longrightarrow 0$
and part \textup{(c)}, we conclude that $A\in \FPR(R)$.

\medskip
\textup{(2)} This follows by repeated applications of \textup{(1)(b)} and
\textup{(1)(d)}.
\end{proof}

The closure properties established in Lemma~\ref{lem:FPR-basic} will be used
repeatedly in the sequel. Before proving the change-of-rings theorems for the
$\FT$-flat dimension, however, it is important to note that the class $\FPR(R)$ does
not in general behave well under localization in the reverse direction. More
precisely, if $S$ is a multiplicative subset of $R$, an $R_S$-module may belong to
$\FPR(R_S)$ without belonging to $\FPR(R)$ when regarded as an $R$-module. The next
example shows this phenomenon.

The classical flat dimension satisfies the following change-of-rings inequality: if
$\varphi\colon R\to T$ is a ring homomorphism, then for every $T$-module $L$,
\[
\fd_R L\le \fd_T L+\fd_R T.
\]
We now establish analogous inequalities for the $\FT$-flat dimension.

\begin{lemma}\label{lem:FT-SES}
Let
$0\longrightarrow A\longrightarrow B\longrightarrow C\longrightarrow 0$
be a short exact sequence of $R$-modules. Then:
\begin{enumerate}
\item[(1)]
$\FT\text{-}\fd_R C\le 1+\max\{\,\FT\text{-}\fd_R A,\ \FT\text{-}\fd_R B\,\}.$
\item[(2)]
If $\FT\text{-}\fd_R B<\FT\text{-}\fd_R C$, then
$\FT\text{-}\fd_R A=\FT\text{-}\fd_R C-1$ and hence $\FT\text{-}\fd_R A\ge \FT\text{-}\fd_R B.$
\end{enumerate}
\end{lemma}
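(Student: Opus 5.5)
The plan is to reduce everything to vanishing of $\Tor$ against modules in $\FPR(R)$, and then read both parts off the long exact $\Tor$ sequence.

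\emph{Step 1: a $\Tor$-criterion for $\FT$-flat dimension.} For every $N\in \FPR(R)$ and every $i\ge 1$, the $(i-1)$-st syzygy $N'$ of $N$ in a finite resolution by finitely generated projectives lies in $\FPR(R)$ by Lemma~\ref{lem:FPR-basic}(1)(a). Hence dimension shifting, $\Tor_i^R(N,M)\cong \Tor_1^R(N',M)$, shows that an $\FT$-flat module $M$ satisfies $\Tor_i^R(N,M)=0$ for all $i\ge 1$ and all $N\in\FPR(R)$.

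From the long exact $\Tor$ sequence, the class of $\FT$-flat modules is closed under extensions and under kernels of epimorphisms. It also contains all projective modules. The standard argument then gives the criterion
\[
\FT\text{-}\fd_R M\le n \iff \Tor_{n+1}^R(N,M)=0 \text{ for all } N\in\FPR(R) \iff \Tor_{n+k}^R(N,M)=0 \text{ for all } k\ge1,\ N\in\FPR(R).
\]
To prove it, compare an arbitrary $\FT$-flat resolution of length $n$ with a projective resolution. The $n$-th syzygy $K_n$ in a projective resolution satisfies $\Tor_1^R(N,K_n)\cong\Tor_{n+1}^R(N,M)$. Conversely, if an $\FT$-flat resolution of length $n$ exists, dimension shifting along it, using the vanishing of higher $\Tor$ on $\FT$-flat modules from above, kills $\Tor_{n+1}^R(N,M)$.

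I expect this step to be the main technical point. One needs the syzygy-closure of $\FPR(R)$, which is exactly Lemma~\ref{lem:FPR-basic}(1)(a), to pass from $\Tor_1$ to all higher $\Tor$. One also needs care that the definition allows arbitrary $\FT$-flat modules $P_i$ rather than projectives.

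\emph{Step 2: part (1).} Fix $N\in\FPR(R)$ and use the exact segment
\[
\Tor_{k+1}^R(N,B)\to\Tor_{k+1}^R(N,C)\to\Tor_k^R(N,A).
\]
If $m=\max\{\FT\text{-}\fd_R A,\FT\text{-}\fd_R B\}<\infty$, then for $k\ge m+1$ both outer terms vanish. Hence $\Tor_{k+1}^R(N,C)=0$ for all $k\ge m+1$, so $\FT\text{-}\fd_R C\le m+1$ by Step 1. If $m=\infty$ there is nothing to prove.

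\emph{Step 3: part (2).} Put $b=\FT\text{-}\fd_R B<c=\FT\text{-}\fd_R C$, so in particular $b<\infty$, and let $a=\FT\text{-}\fd_R A$. For $k\ge b+1$, the sequence
\[
\Tor_{k+1}^R(N,B)\to\Tor_{k+1}^R(N,C)\to\Tor_k^R(N,A)\to\Tor_k^R(N,B)
\]
has vanishing ends, so $\Tor_{k+1}^R(N,C)\cong\Tor_k^R(N,A)$ for all $N\in\FPR(R)$.

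First I show $a\le c-1$. If $c<\infty$, then $c-1\ge b$, so the vanishing of $\Tor_k^R(N,A)$ for $k\ge c$ follows from that of $\Tor_{k+1}^R(N,C)$, giving $a\le c-1$. If $c=\infty$, this inequality is vacuous.

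Next I show $a\ge c-1$, by contradiction. Suppose $a<c-1$. Then part (1) gives $c\le 1+\max\{a,b\}$. Since $b<c$, this forces $c\le a+1$, contradicting $a<c-1$. When $c=\infty$ the same argument gives $a=\infty$.

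Thus $a=c-1\ge b$, which is exactly the statement of (2).
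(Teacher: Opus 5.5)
\textbf{There is a gap in the second half of your Step 3.} Steps 1 and 2, and your proof of $a\le c-1$ in Step 3, are fine. Step 1 reproves the $\Tor$-criterion that the paper simply quotes as Proposition~4.7 of its reference, and Step 2 is the paper's argument.

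The gap is in the proof of $a\ge c-1$ when $c<\infty$. You assume $a<c-1$ and claim that part (1), $c\le 1+\max\{a,b\}$, together with $b<c$ forces $c\le a+1$. That inference is valid only when $\max\{a,b\}=a$. If $a<b$, part (1) gives only $c\le b+1$. This is compatible with $b<c$ (take $c=b+1$) and with $a<c-1=b$. So the configuration $a<b$, $c=b+1$ is not excluded by anything you wrote, because part (1) is too coarse to detect it. Your treatment of $c=\infty$ via part (1) is correct as it stands.

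\textbf{The missing ingredient.} You need the sharper bound $c\le\max\{b,\,a+1\}$, or equivalently the observation the paper uses to show first that $a\ge b$. At $k=b$, the exact segment $\Tor_{b+1}^R(N,B)\to\Tor_{b+1}^R(N,C)\to\Tor_b^R(N,A)$ has $\Tor_{b+1}^R(N,B)=0$. Hence $\Tor_{b+1}^R(N,C)$ embeds in $\Tor_b^R(N,A)$; this is an injection, not the isomorphism you only established for $k\ge b+1$.

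If $a<b$, then $\Tor_b^R(N,A)=0$ for every $N\in\FPR(R)$. Therefore $\Tor_{b+1}^R(N,C)=0$ for all such $N$, and your Step 1 gives $c\le b$, contradicting $b<c$. Thus $a\ge b$. Only then does part (1) yield $c\le 1+a$, and combined with your inequality $a\le c-1$ this gives $a=c-1\ge b$.
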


\begin{proof}
\textup{(1)}
Set
$n=\max\{\,\FT\text{-}\fd_R A,\ \FT\text{-}\fd_R B\,\}.$
Then $\FT\text{-}\fd_R A\le n$ and $\FT\text{-}\fd_R B\le n$. Hence, for every
$N\in\FPR(R)$, \cite[Proposition~4.7]{WZKXS2020} gives
\[
\Tor_{n+1}^R(A,N)=0
\qquad\text{and}\qquad
\Tor_{n+1}^R(B,N)=\Tor_{n+2}^R(B,N)=0.
\]
Applying the long exact $\Tor$ sequence to
$0\longrightarrow A\longrightarrow B\longrightarrow C\longrightarrow 0,$
we obtain
\[
0=\Tor^{R}_{n+2}(B,N)\longrightarrow \Tor^{R}_{n+2}(C,N)\longrightarrow
\Tor^{R}_{n+1}(A,N)=0.
\]
Therefore $\Tor^{R}_{n+2}(C,N)=0$ for every $N\in\FPR(R)$. Another application of
\cite[Proposition~4.7]{WZKXS2020} yields
$\FT\text{-}\fd_R C\le n+1,$
as desired.

\medskip
\textup{(2)}
Put
$n=\FT\text{-}\fd_R B.$
For every integer $k>n$ and every $N\in\FPR(R)$, we have
$\Tor_k^R(B,N)=\Tor_{k+1}^R(B,N)=0.$
Hence the long exact $\Tor$ sequence gives
\[
0=\Tor^{R}_{k+1}(B,N)\longrightarrow \Tor^{R}_{k+1}(C,N)\longrightarrow
\Tor^{R}_{k}(A,N)\longrightarrow \Tor^{R}_{k}(B,N)=0,
\]
and therefore
$\Tor^{R}_{k+1}(C,N)\cong \Tor^{R}_{k}(A,N).$
It follows that $\FT\text{-}\fd_R C<\infty$ if and only if $\FT\text{-}\fd_R A<\infty$.

Now assume that
$\FT\text{-}\fd_R B<\FT\text{-}\fd_R C.$
Since $\FT\text{-}\fd_R B=n$, we must have $\FT\text{-}\fd_R C>n$. We claim that
$\FT\text{-}\fd_R A\ge n$. Indeed, if $\FT\text{-}\fd_R A<n$, then
$\Tor_n^R(A,N)=0$ for all $N\in\FPR(R),$
and taking $k=n$ in the above isomorphism yields
$\Tor_{n+1}^R(C,N)\cong \Tor_n^R(A,N)=0$ for all $N\in\FPR(R).$
By \cite[Proposition~4.7]{WZKXS2020}, this implies
$\FT\text{-}\fd_R C\le n,$
a contradiction. Thus
$\FT\text{-}\fd_R A\ge n.$

Set
$m=\FT\text{-}\fd_R A.$
Then for every $N\in\FPR(R)$,
$\Tor_{m+1}^R(A,N)=0.$
Since $m\ge n$, we may take $k=m+1>n$ in the above isomorphism to obtain
\[
\Tor_{m+2}^R(C,N)\cong \Tor_{m+1}^R(A,N)=0
\qquad \text{for all } N\in\FPR(R).
\]
Again by \cite[Proposition~4.7]{WZKXS2020}, it follows that
$\FT\text{-}\fd_R C\le m+1.$

To prove the reverse inequality, note that if $m>n$, then by definition of $m$ there
exists $N\in\FPR(R)$ such that
$\Tor_m^R(A,N)\ne 0.$
Since $m>n$, taking $k=m$ in the above isomorphism gives
$\Tor_{m+1}^R(C,N)\cong \Tor_m^R(A,N)\ne 0,$
and hence
$\FT\text{-}\fd_R C\ge m+1.$
Therefore $\FT\text{-}\fd_R C=m+1$.

If $m=n$, then the inequality $\FT\text{-}\fd_R C>\FT\text{-}\fd_R B=n=m$ together with
$\FT\text{-}\fd_R C\le m+1$ implies again that
$\FT\text{-}\fd_R C=m+1.$
Consequently,
$\FT\text{-}\fd_R A=\FT\text{-}\fd_R C-1,$
and in particular
$\FT\text{-}\fd_R A\ge \FT\text{-}\fd_R B.$
This completes the proof.
\end{proof}

\begin{proposition}\label{prop:FT-witness}
Let $M$ be an $R$-module and assume that $\FT\text{-}\fd_R M=n\ge 1$. Then there exists
$N\in \FPR(R)$ such that
$\Tor_n^R(M,N)=0$ but $\Tor_{n-1}^R(M,N)\ne 0.$
\end{proposition}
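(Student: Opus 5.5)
The idea is a single dimension shift, carried out with \cite[Proposition~4.7]{WZKXS2020} and Lemma~\ref{lem:FPR-basic}(1)(a). Recall that \cite[Proposition~4.7]{WZKXS2020} characterizes $\FT\text{-}\fd_R M\le k$ by the vanishing of $\Tor_{k+1}^R(M,N)$ for all $N\in\FPR(R)$. We use it in both directions:
\begin{itemize}
\item since $\FT\text{-}\fd_R M\le n$, we get $\Tor_{n+1}^R(M,N)=0$ for every $N\in\FPR(R)$;
\item since $\FT\text{-}\fd_R M\not\le n-1$ (here $n-1\ge 0$ makes sense because $n\ge 1$), there is some $N_0\in\FPR(R)$ with $\Tor_n^R(M,N_0)\ne 0$.
\end{itemize}
This $N_0$ is not yet the witness we want, since the required module must kill $\Tor_n$. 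So we shift it down by one syzygy.

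Since $N_0\in\FPR(R)$, it is finitely generated. Choose a short exact sequence $0\to N\to P\to N_0\to 0$ with $P$ finitely generated projective. By Lemma~\ref{lem:FPR-basic}(1)(a), $N\in\FPR(R)$; this $N$ is our candidate. Apply the long exact $\Tor$ sequence in the second variable:
\[
\Tor_{n+1}^R(M,N_0)\to \Tor_n^R(M,N)\to \Tor_n^R(M,P)\to \Tor_n^R(M,N_0)\to \Tor_{n-1}^R(M,N)\to \Tor_{n-1}^R(M,P).
\]
Because $n\ge 1$ and $P$ is projective, $\Tor_n^R(M,P)=0$. Moreover $\Tor_{n+1}^R(M,N_0)=0$ by the first bullet point. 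Hence $\Tor_n^R(M,N)=0$, which is the required vanishing.

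For the nonvanishing we split into two cases.
\begin{itemize}
\item If $n\ge 2$, then also $\Tor_{n-1}^R(M,P)=0$. So $\Tor_{n-1}^R(M,N)\cong\Tor_n^R(M,N_0)\ne 0$.
\item If $n=1$, the sequence reads $0\to\Tor_1^R(M,N_0)\to M\otimes_R N\to M\otimes_R P$. Thus $\Tor_1^R(M,N_0)\ne 0$ embeds into $\Tor_0^R(M,N)=M\otimes_R N$, which is therefore nonzero.
\end{itemize}
Either way $\Tor_{n-1}^R(M,N)\ne 0$, completing the argument.

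The only point needing care is the boundary case $n=1$, where the final term $\Tor_0^R(M,P)=M\otimes_R P$ need not vanish. It is handled by using injectivity of the connecting map rather than an isomorphism. Everything else rests on the earlier results: closure of $\FPR(R)$ under first syzygies (Lemma~\ref{lem:FPR-basic}(1)(a)) and the $\Tor$-characterization \cite[Proposition~4.7]{WZKXS2020}.
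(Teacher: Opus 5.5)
Your argument is correct and is essentially the paper's proof. You choose $N_0\in\FPR(R)$ with $\Tor_n^R(M,N_0)\neq 0$ from the $\Tor$-characterization of $\FT$-flat dimension, pass to a first syzygy $N\in\FPR(R)$ (Lemma~\ref{lem:FPR-basic}), and read off both the vanishing and the nonvanishing from the long exact $\Tor$ sequence. The only difference is cosmetic: the paper skips your split into $n=1$ and $n\ge 2$, since $\Tor_n^R(M,P)=0$ already makes the connecting map $\Tor_n^R(M,N_0)\to\Tor_{n-1}^R(M,N)$ injective for every $n\ge 1$.
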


\begin{proof}
Since $\FT\text{-}\fd_R M=n$, \cite[Proposition~4.7]{WZKXS2020} yields an
$R$-module $X\in \FPR(R)$ such that
$\Tor_n^R(M,X)\ne 0.$
Because $X\in \FPR(R)$, in particular $X$ is finitely generated. Hence there exists a
short exact sequence
$0\longrightarrow N\longrightarrow F\longrightarrow X\longrightarrow 0,$
where $F$ is a finitely generated free $R$-module. By
Lemma~\ref{lem:FPR-basic}, we have $N\in \FPR(R)$.

Applying $-\otimes_R M$ and using \cite[Proposition~4.7]{WZKXS2020}, we obtain an exact sequence
\[
0=\Tor_{n+1}^R(M,X)\longrightarrow \Tor_n^R(M,N)\longrightarrow \Tor_n^R(M,F)=0
\longrightarrow \Tor_n^R(M,X)\longrightarrow \Tor_{n-1}^R(M,N).
\]
Thus
$\Tor_n^R(M,N)=0.$
Moreover, if $\Tor_{n-1}^R(M,N)=0$, then the above exact sequence would imply
$\Tor_n^R(M,X)=0,$
contrary to the choice of $X$. Therefore
$\Tor_{n-1}^R(M,N)\ne 0.$
This completes the proof.
\end{proof}

\begin{theorem}\label{thm:FT-direct-sum}
Let $\{A_i\}_{i\in I}$ be a nonempty family of $R$-modules. Then
\[
\FT\text{-}\fd_R\Big(\bigoplus_{i\in I} A_i\Big)
=\sup_{i\in I}\FT\text{-}\fd_R A_i.
\]
\end{theorem}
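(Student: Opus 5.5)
We will argue through the $\Tor$-characterization of $\FT$-flat dimension from \cite[Proposition~4.7]{WZKXS2020}, which is already used in Lemma~\ref{lem:FT-SES} and Proposition~\ref{prop:FT-witness}. Concretely, for an $R$-module $M$ and an integer $n\ge 0$, we have $\FT\text{-}\fd_R M\le n$ if and only if $\Tor_{n+1}^R(M,N)=0$ for every $N\in\FPR(R)$. The key input is that $\Tor$ commutes with arbitrary direct sums in each variable:
\[
\Tor_k^R\Big(\bigoplus_{i\in I}A_i,N\Big)\cong\bigoplus_{i\in I}\Tor_k^R(A_i,N)
\]
for every $k\ge 0$ and every $R$-module $N$. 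This holds because a projective resolution of $N$ tensored with a direct sum is the direct sum of the tensored complexes, and homology commutes with direct sums.

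Write $M=\bigoplus_{i\in I}A_i$ and $s=\sup_{i\in I}\FT\text{-}\fd_R A_i\in\{0,1,2,\dots\}\cup\{\infty\}$.

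\emph{Inequality $\FT\text{-}\fd_R M\le s$.} If $s=\infty$ there is nothing to prove. Otherwise $\FT\text{-}\fd_R A_i\le s$ for all $i$. Hence $\Tor_{s+1}^R(A_i,N)=0$ for all $i$ and all $N\in\FPR(R)$. By the isomorphism above, $\Tor_{s+1}^R(M,N)=0$ for all $N\in\FPR(R)$, so $\FT\text{-}\fd_R M\le s$.

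\emph{Inequality $\FT\text{-}\fd_R M\ge s$.} Fix $i$ and suppose $\FT\text{-}\fd_R M\le n<\infty$. Then $\Tor_{n+1}^R(M,N)=0$ for all $N\in\FPR(R)$. Since $\Tor_{n+1}^R(A_i,N)$ is a direct summand of $\Tor_{n+1}^R(M,N)$, it also vanishes, and so $\FT\text{-}\fd_R A_i\le n$. Taking the supremum over $i$ gives $s\le\FT\text{-}\fd_R M$, and this also covers the case $\FT\text{-}\fd_R M=\infty$ trivially. Alternatively, one can use that $A_i$ is a direct summand of $M$, though the $\Tor$ argument is cleaner.

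The only point needing care is the correct use of the characterization from \cite[Proposition~4.7]{WZKXS2020}, namely that vanishing of $\Tor_{n+1}^R(-,N)$ for all $N\in\FPR(R)$, a single degree, is equivalent to $\FT\text{-}\fd\le n$. This is how the paper used it in Lemma~\ref{lem:FT-SES}, so no real obstacle is expected. The commutation of $\Tor$ with direct sums is standard.
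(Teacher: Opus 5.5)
Your proposal is correct and follows essentially the same route as the paper: the isomorphism $\Tor_n^R\bigl(\bigoplus_{i\in I}A_i,N\bigr)\cong\bigoplus_{i\in I}\Tor_n^R(A_i,N)$ for $N\in\FPR(R)$, combined with the single-degree $\Tor$-characterization of $\FT$-flat dimension from \cite[Proposition~4.7]{WZKXS2020}. The only difference is that you write out both inequalities (including the infinite cases) explicitly, whereas the paper states that the equality ``follows'' from that characterization.
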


\begin{proof}
Set $A=\bigoplus_{i\in I}A_i$. For every $N\in \FPR(R)$ and every integer $n\ge 0$,
\[
\Tor_n^R(A,N)\cong \bigoplus_{i\in I}\Tor_n^R(A_i,N).
\]
Hence
\[
\Tor_n^R(A,N)=0
\quad\Longleftrightarrow\quad
\Tor_n^R(A_i,N)=0\ \text{for all }i\in I.
\]
The equality
$\FT\text{-}\fd_R(A)=\sup_{i\in I}\FT\text{-}\fd_R A_i$
now follows from \cite[Proposition~4.7]{WZKXS2020}.
\end{proof}

\begin{theorem}\label{thm:FT-change-of-rings}
Let $\varphi\colon R\to T$ be a ring homomorphism.
\begin{enumerate}
\item[(1)]
If $T$ is flat as an $R$-module, then for every $T$-module $L$,
$\FT\text{-}\fd_R L\le \FT\text{-}\fd_T L.$
\item[(2)]
For every $T$-module $L$,
$\FT\text{-}\fd_R L\le \FT\text{-}\fd_T L+\FT\text{-}\fd_R T.$
\end{enumerate}
\end{theorem}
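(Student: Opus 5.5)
For both parts we test the $\FT$-flat dimension against modules in $\FPR(R)$, using the Tor-criterion \cite[Proposition~4.7]{WZKXS2020}: $\FT\text{-}\fd_R M\le k$ if and only if $\Tor_i^R(M,N)=0$ for all $i>k$ and all $N\in\FPR(R)$. The plan is to transport each such $N$ to a module in $\FPR(T)$ by base change, and then compare $R$-Tor with $T$-Tor.

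For (1), fix $N\in\FPR(R)$ together with a finite resolution $0\to P_k\to\cdots\to P_0\to N\to 0$ by finitely generated projective $R$-modules. Since $T$ is $R$-flat, applying $T\otimes_R-$ gives an exact sequence $0\to T\otimes_R P_k\to\cdots\to T\otimes_R P_0\to T\otimes_R N\to 0$. Its terms are finitely generated projective $T$-modules, so $T\otimes_R N\in\FPR(T)$. Flat base change then gives
\[
\Tor_i^R(L,N)\cong H_i(L\otimes_R P_\bullet)\cong H_i(L\otimes_T(T\otimes_R P_\bullet))\cong\Tor_i^T(L,T\otimes_R N).
\]
If $\FT\text{-}\fd_T L=n$, the right-hand side vanishes for $i>n$, and the Tor-criterion gives $\FT\text{-}\fd_R L\le n$.

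For (2), assume $n=\FT\text{-}\fd_T L$ and $m=\FT\text{-}\fd_R T$ are both finite; otherwise there is nothing to prove. Take $N\in\FPR(R)$ with a finite resolution $P_\bullet$ by finitely generated projectives, and let $K$ be the $m$-th syzygy of $N$ in $P_\bullet$. By Lemma~\ref{lem:FPR-basic}(2), $K\in\FPR(R)$, so the truncation $P_{\ge m}$ is a finite resolution of $K$ by finitely generated projectives.

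The key point, and the step I expect to be the main obstacle, is that $T$ may fail to be flat, so we must show $T\otimes_R K\in\FPR(T)$ by hand. Dimension shifting gives $\Tor_i^R(T,K)\cong\Tor_{i+m}^R(T,N)$ for $i\ge 1$, and this vanishes because $\FT\text{-}\fd_R T=m$ and $N\in\FPR(R)$. Hence $T\otimes_R P_{\ge m}$ is an exact finite resolution of $T\otimes_R K$ by finitely generated projective $T$-modules, so $T\otimes_R K\in\FPR(T)$.

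With this in hand, the same computation as in (1) applies to the complex $T\otimes_R P_{\ge m}$, which is now acyclic in positive degrees. It gives $\Tor_i^R(L,K)\cong\Tor_i^T(L,T\otimes_R K)$, which vanishes for $i>n$ by the Tor-criterion over $T$. Dimension shifting along $P_\bullet$ then yields $\Tor_{i+m}^R(L,N)\cong\Tor_i^R(L,K)=0$ for all $i>n$, that is, $\Tor_j^R(L,N)=0$ for all $j>n+m$. Since $N\in\FPR(R)$ was arbitrary, \cite[Proposition~4.7]{WZKXS2020} gives $\FT\text{-}\fd_R L\le n+m$.
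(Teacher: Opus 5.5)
Your proof is correct, but it takes a different route from the paper's. The paper proves both parts by induction on $n=\FT\text{-}\fd_T L$, shifting on the $L$-side. It takes $0\to A\to F\to L\to 0$ with $F$ free over $T$, uses Lemma~\ref{lem:FT-SES} to get $\FT\text{-}\fd_T A=n-1$ and Theorem~\ref{thm:FT-direct-sum} to control $\FT\text{-}\fd_R F$, then climbs back to $L$ with Lemma~\ref{lem:FT-SES}(1). Base change enters only in the base case of (1), and only for $\Tor_1$.

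You instead shift on the $N$-side and use base change in all degrees at once. This removes the induction and both auxiliary results, and your part (1) becomes the case $m=0$ of your part (2).

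The real payoff is the base case of (2). There the paper asserts that an $\FT$-flat $T$-module $L$ satisfies $\FT\text{-}\fd_R L\le\FT\text{-}\fd_R T$, justified by ``applying part (1) to $T\to T$.'' That only gives the tautology $\FT\text{-}\fd_T L\le\FT\text{-}\fd_T L$. Unlike ordinary flatness, there is no Lazard-type description of $\FT$-flat modules as direct limits of free ones to fall back on. Your syzygy step supplies what is missing: you pass to the $m$-th syzygy $K$ of $N$, where $\Tor^R_{\ge 1}(T,K)=0$ makes $T\otimes_R P_{\ge m}$ a finite resolution placing $T\otimes_R K$ in $\FPR(T)$. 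At $n=0$ this proves exactly the missing statement, and in general it proves the whole inequality directly. Your argument is therefore the more self-contained of the two.
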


\begin{proof}
\textup{(1)}
Let $n=\FT\text{-}\fd_T L$. If $n=\infty$, there is nothing to prove, so assume that
$n<\infty$.

We argue by induction on $n$.

If $n=0$, then $L$ is $\FT$-flat as a $T$-module. Let $X\in \FPR(R)$. Since $T$ is flat
over $R$, tensoring a finite projective resolution of $X$ over $R$ with $T$ shows that
$X\otimes_R T\in \FPR(T)$. Hence
$\Tor_1^T(X\otimes_R T,L)=0.$
By the standard change-of-rings isomorphism,
$\Tor_1^R(X,L)\cong \Tor_1^T(X\otimes_R T,L),$
and therefore $\Tor_1^R(X,L)=0$. Since this holds for every $X\in \FPR(R)$, the module
$L$ is $\FT$-flat as an $R$-module. Thus
$\FT\text{-}\fd_R L=0.$

Now assume $n>0$. Choose a short exact sequence of $T$-modules
$0\longrightarrow A\longrightarrow F\longrightarrow L\longrightarrow 0,$
where $F$ is a free $T$-module. By Lemma~\ref{lem:FT-SES},
$\FT\text{-}\fd_T A=n-1.$
By the induction hypothesis,
$\FT\text{-}\fd_R A\le n-1.$
Since $F\cong \bigoplus_{i\in I} T$ and $T$ is flat over $R$, part \textup{(1)} with
$L=T$ gives $\FT\text{-}\fd_R T=0$. Hence by
Theorem~\ref{thm:FT-direct-sum},
$\FT\text{-}\fd_R F=0.$
Applying Lemma~\ref{lem:FT-SES} to
$0\longrightarrow A\longrightarrow F\longrightarrow L\longrightarrow 0$
gives
\[
\FT\text{-}\fd_R L
\le 1+\max\{\,\FT\text{-}\fd_R A,\ \FT\text{-}\fd_R F\,\}
\le 1+\max\{\,n-1,0\,\}=n.
\]
Therefore
$\FT\text{-}\fd_R L\le \FT\text{-}\fd_T L.$

\medskip
\textup{(2)}
Let $n=\FT\text{-}\fd_T L$. If $n=\infty$, the inequality is trivial. So assume
$n<\infty$.

We again argue by induction on $n$.

If $n=0$, then $L$ is $\FT$-flat over $T$. Applying part \textup{(1)} to the ring
homomorphism $T\to T$, we obtain
\[
\FT\text{-}\fd_R L\le \FT\text{-}\fd_R T
=\FT\text{-}\fd_T L+\FT\text{-}\fd_R T.
\]

Now assume $n>0$. Choose a short exact sequence of $T$-modules
$0\longrightarrow A\longrightarrow F\longrightarrow L\longrightarrow 0,$
where $F$ is a free $T$-module. Then
$\FT\text{-}\fd_T A=n-1$
by Lemma~\ref{lem:FT-SES}. By the induction hypothesis,
$\FT\text{-}\fd_R A\le (n-1)+\FT\text{-}\fd_R T.$
Also, since $F\cong \bigoplus_{i\in I}T$, Theorem~\ref{thm:FT-direct-sum} gives
$\FT\text{-}\fd_R F=\FT\text{-}\fd_R T.$
Applying Lemma~\ref{lem:FT-SES}, we get
\[
\FT\text{-}\fd_R L
\le 1+\max\{\,\FT\text{-}\fd_R A,\ \FT\text{-}\fd_R F\,\}
\le 1+\max\{(n-1)+\FT\text{-}\fd_R T,\ \FT\text{-}\fd_R T\}.
\]
Since $(n-1)+\FT\text{-}\fd_R T\ge \FT\text{-}\fd_R T$ when $n>0$, it follows that
\[
\FT\text{-}\fd_R L\le n+\FT\text{-}\fd_R T
=\FT\text{-}\fd_T L+\FT\text{-}\fd_R T.
\]
This completes the proof.
\end{proof}

We know that an $R$-module $M$ is flat if and only if $M_{\mathfrak p}$ is a flat
$R_{\mathfrak p}$-module for every prime ideal $\mathfrak p$ of $R$; equivalently, if and only if
$M_{\mathfrak m}$ is a flat $R_{\mathfrak m}$-module for every maximal ideal $\mathfrak m$ of $R$.
Moreover, for any $R$-module $M$,
\begin{eqnarray*}
  \fd_R M &=& \sup\{\,\fd_{R_{\mathfrak m}} M_{\mathfrak m}\mid \mathfrak m\in\Max(R)\,\} \\
   &=& \sup\{\,\fd_{R_{\mathfrak p}} M_{\mathfrak p}\mid \mathfrak p\in\Spec(R)\,\}.
\end{eqnarray*}
We next record a localization property of $\FT$-flat modules.

\begin{proposition}\label{prop:FT-flat-local}
Let $M$ be an $R$-module. Consider the following conditions:
\begin{enumerate}
\item[(1)] For every prime ideal $\mathfrak p\in \Spec(R)$, the localized module
$M_{\mathfrak p}$ is $\FT$-flat as an $R_{\mathfrak p}$-module.
\item[(2)] For every maximal ideal $\mathfrak m\in \Max(R)$, the localized module
$M_{\mathfrak m}$ is $\FT$-flat as an $R_{\mathfrak m}$-module.
\item[(3)] The module $M$ is $\FT$-flat as an $R$-module.
\end{enumerate}
Then
$(1)\Longrightarrow (2)\Longrightarrow (3).$
\end{proposition}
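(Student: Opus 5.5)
(1)$\Rightarrow$(2) is immediate, since every maximal ideal is prime. For (2)$\Rightarrow$(3), fix $N\in \FPR(R)$; we must show $\Tor_1^R(N,M)=0$. Since a module vanishes precisely when all its localizations at maximal ideals vanish, it suffices to prove $\Tor_1^R(N,M)_{\mathfrak m}=0$ for every $\mathfrak m\in\Max(R)$.

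Fix $\mathfrak m$ and localize. Because $R_{\mathfrak m}$ is flat over $R$, we have the standard isomorphism
\[
\Tor_1^R(N,M)_{\mathfrak m}\cong \Tor_1^{R_{\mathfrak m}}(N_{\mathfrak m},M_{\mathfrak m}).
\]
It remains to see that $N_{\mathfrak m}\in\FPR(R_{\mathfrak m})$. This is the argument already used in the proof of Theorem~\ref{thm:FT-change-of-rings}(1). Take a finite resolution
\[
0\to P_n\to\cdots\to P_0\to N\to 0
\]
by finitely generated projective $R$-modules and apply the exact functor $-\otimes_R R_{\mathfrak m}$. This gives a finite resolution of $N_{\mathfrak m}$ by the finitely generated projective $R_{\mathfrak m}$-modules $(P_i)_{\mathfrak m}$, so $N_{\mathfrak m}\in\FPR(R_{\mathfrak m})$. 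Hypothesis (2) then gives $\Tor_1^{R_{\mathfrak m}}(N_{\mathfrak m},M_{\mathfrak m})=0$, hence $\Tor_1^R(N,M)_{\mathfrak m}=0$ for every $\mathfrak m$. Therefore $\Tor_1^R(N,M)=0$, and $M$ is $\FT$-flat.

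There is no real obstacle here. The only points needing care are the commutation of $\Tor$ with localization, which is standard because localization is exact and preserves projective resolutions, and the fact that the argument runs in this direction only. Example~\ref{ex:localization-not-FPR} shows that modules in $\FPR(R_{\mathfrak m})$ need not come from $\FPR(R)$, which explains why the converse (3)$\Rightarrow$(2) is not claimed.
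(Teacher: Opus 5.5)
Your proof is correct and matches the paper's argument: localize a finite resolution of $N\in\FPR(R)$ to get $N_{\mathfrak m}\in\FPR(R_{\mathfrak m})$, use $\Tor_1^R(N,M)_{\mathfrak m}\cong\Tor_1^{R_{\mathfrak m}}(N_{\mathfrak m},M_{\mathfrak m})=0$, and conclude by the local-global principle. One quibble with your closing aside, which does not affect the proof: Example~\ref{ex:localization-not-FPR} is about restriction of scalars, and its modules $\mathbb{Q}=\mathbb{Z}_S$ and $L\cong (R/(y))_S$ are in fact localizations of $\mathbb{Z}\in\FPR(\mathbb{Z})$ and $R/(y)\in\FPR(R)$, so it does not show that modules in $\FPR(R_{\mathfrak m})$ fail to come from $\FPR(R)$.
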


\begin{proof}
The implication \textup{(1)}$\Longrightarrow$\textup{(2)} is immediate.

We prove \textup{(2)}$\Longrightarrow$\textup{(3)}.
Let $N\in \FPR(R)$. Then there exists an exact sequence
\[
0\longrightarrow P_n\longrightarrow P_{n-1}\longrightarrow \cdots
\longrightarrow P_1\longrightarrow P_0\longrightarrow N\longrightarrow 0,
\]
where each $P_i$ is a finitely generated projective $R$-module. Localizing at any maximal
ideal $\mathfrak m$, we obtain an exact sequence
\[
0\longrightarrow (P_n)_{\mathfrak m}\longrightarrow (P_{n-1})_{\mathfrak m}\longrightarrow \cdots
\longrightarrow (P_1)_{\mathfrak m}\longrightarrow (P_0)_{\mathfrak m}\longrightarrow N_{\mathfrak m}\longrightarrow 0,
\]
where each $(P_i)_{\mathfrak m}$ is a finitely generated projective
$R_{\mathfrak m}$-module. Hence
\[
N_{\mathfrak m}\in \FPR(R_{\mathfrak m})
\qquad\text{for every }\mathfrak m\in \Max(R).
\]

By assumption, $M_{\mathfrak m}$ is $\FT$-flat over $R_{\mathfrak m}$, so
\[
\Tor^{R_{\mathfrak m}}_1(M_{\mathfrak m},N_{\mathfrak m})=0
\qquad\text{for every }\mathfrak m\in \Max(R).
\]
Since localization commutes with $\Tor$, we have
\[
\Tor^R_1(M,N)_{\mathfrak m}
\cong
\Tor^{R_{\mathfrak m}}_1(M_{\mathfrak m},N_{\mathfrak m})
=0
\qquad\text{for every }\mathfrak m\in \Max(R).
\]
Therefore $\Tor^R_1(M,N)=0$. As this holds for every $N\in \FPR(R)$, it follows that
$M$ is $\FT$-flat as an $R$-module.
\end{proof}

\begin{proposition}\label{prop:FTfd-localization-ineq}
Let $M$ be an $R$-module. Then
\[
\FT\text{-}\fd_R M
\le
\sup\{\,\FT\text{-}\fd_{R_{\mathfrak m}} M_{\mathfrak m}\mid \mathfrak m\in\Max(R)\,\}.
\]
Similarly,
\[
\FT\text{-}\fd_R M
\le
\sup\{\,\FT\text{-}\fd_{R_{\mathfrak p}} M_{\mathfrak p}\mid \mathfrak p\in\Spec(R)\,\}.
\]
\end{proposition}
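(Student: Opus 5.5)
We argue through the $\Tor$-vanishing characterization of $\FT$-flat dimension, \cite[Proposition~4.7]{WZKXS2020}, which was already used in Lemma~\ref{lem:FT-SES}: $\FT\text{-}\fd_R M\le n$ if and only if $\Tor_{n+1}^R(M,N)=0$ for every $N\in\FPR(R)$. The same characterization holds over each local ring $R_{\mathfrak m}$.

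First we dispose of the trivial case. Set $n=\sup\{\FT\text{-}\fd_{R_{\mathfrak m}}M_{\mathfrak m}\mid \mathfrak m\in\Max(R)\}$. If $n=\infty$, there is nothing to prove, so assume $n<\infty$.

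Now fix $N\in\FPR(R)$. As in the proof of Proposition~\ref{prop:FT-flat-local}, localizing a finite resolution of $N$ by finitely generated projectives shows that $N_{\mathfrak m}\in\FPR(R_{\mathfrak m})$ for every $\mathfrak m\in\Max(R)$. Since localization is exact and commutes with $\Tor$,
\[
\Tor_{n+1}^R(M,N)_{\mathfrak m}\cong \Tor_{n+1}^{R_{\mathfrak m}}(M_{\mathfrak m},N_{\mathfrak m}).
\]
Because $\FT\text{-}\fd_{R_{\mathfrak m}}M_{\mathfrak m}\le n$, the right-hand side vanishes by the characterization applied over $R_{\mathfrak m}$. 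Thus $\Tor_{n+1}^R(M,N)$ is locally zero at every maximal ideal, hence zero. Since $N\in\FPR(R)$ was arbitrary, \cite[Proposition~4.7]{WZKXS2020} gives $\FT\text{-}\fd_R M\le n$.

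The prime-ideal version follows at once. Maximal ideals are prime, so the supremum over $\Spec(R)$ is at least the supremum over $\Max(R)$. Alternatively, the same argument works verbatim with primes in place of maximal ideals.

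An alternative route would be induction on $n$, using a free presentation of $M$ and Lemma~\ref{lem:FT-SES}, with Proposition~\ref{prop:FT-flat-local} as the base case. However, that requires controlling the dimension of the syzygy locally, so the direct $\Tor$ argument is preferable. The only point requiring care is that the characterization in \cite[Proposition~4.7]{WZKXS2020} is available over the local rings $R_{\mathfrak m}$ and applies to the specific test modules $N_{\mathfrak m}$. This is legitimate because the characterization holds for arbitrary commutative rings and $N_{\mathfrak m}\in\FPR(R_{\mathfrak m})$.
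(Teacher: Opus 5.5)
Your proof is correct and uses the same ingredients as the paper's: the $\Tor$-vanishing characterization of $\FT$-flat dimension, the fact that $N_{\mathfrak m}\in\FPR(R_{\mathfrak m})$, and the isomorphism $\Tor_k^R(M,N)_{\mathfrak m}\cong\Tor_k^{R_{\mathfrak m}}(M_{\mathfrak m},N_{\mathfrak m})$. The only difference is direction: the paper sets $n=\FT\text{-}\fd_R M$, takes a witness $N$ with $\Tor_n^R(M,N)\neq 0$ and finds a maximal ideal where it survives, whereas you show that vanishing of $\Tor_{n+1}$ at every maximal ideal forces global vanishing. Your framing, which starts from a finite supremum, also handles $\FT\text{-}\fd_R M=\infty$ correctly. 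The paper dismisses that case as ``nothing to prove,'' even though one must still show the supremum is infinite.
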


\begin{proof}
We prove the first inequality; the second is analogous.

If $\FT\text{-}\fd_R M=\infty$, there is nothing to prove. Assume that
$\FT\text{-}\fd_R M=n<\infty.$
By \cite[Proposition~4.7]{WZKXS2020}, there exists an $R$-module $N\in \FPR(R)$ such that
$\Tor_n^R(M,N)\ne 0.$
Choose a finite projective resolution
\[
0\longrightarrow P_n\longrightarrow P_{n-1}\longrightarrow \cdots
\longrightarrow P_1\longrightarrow P_0\longrightarrow N\longrightarrow 0,
\]
where each $P_i$ is a finitely generated projective $R$-module. Localizing at any
$\mathfrak m\in\Max(R)$, we obtain
\[
0\longrightarrow (P_n)_{\mathfrak m}\longrightarrow (P_{n-1})_{\mathfrak m}\longrightarrow \cdots
\longrightarrow (P_1)_{\mathfrak m}\longrightarrow (P_0)_{\mathfrak m}\longrightarrow N_{\mathfrak m}\longrightarrow 0,
\]
so $N_{\mathfrak m}\in \FPR(R_{\mathfrak m})$.

Since localization commutes with $\Tor$,
\[
\Tor_n^R(M,N)_{\mathfrak m}\cong
\Tor_n^{R_{\mathfrak m}}(M_{\mathfrak m},N_{\mathfrak m})
\qquad\text{for every }\mathfrak m\in\Max(R).
\]
Because $\Tor_n^R(M,N)\ne 0$, there exists some $\mathfrak m\in\Max(R)$ such that
$\Tor_n^{R_{\mathfrak m}}(M_{\mathfrak m},N_{\mathfrak m})\ne 0.$
Hence, by \cite[Proposition~4.7]{WZKXS2020},
$\FT\text{-}\fd_{R_{\mathfrak m}} M_{\mathfrak m}\ge n.$
Therefore
\[
\sup\{\,\FT\text{-}\fd_{R_{\mathfrak m}} M_{\mathfrak m}\mid \mathfrak m\in\Max(R)\,\}
\ge n=\FT\text{-}\fd_R M.  \qedhere
\]
\end{proof}

\section{Change-of-Rings Theorems for the Small Finitistic Dimension} 

We first give several equivalent characterizations of the small finitistic dimension $\fPD(R)$ of a ring $R$.

\begin{theorem}\label{thm:fPD-characterizations}
Let $R$ be a ring. The following statements are equivalent:
\begin{enumerate}
\item[(1)] $\fPD(R)\le n$.
\item[(2)] For every prime ideal $\mathfrak p$ of $R$, one has
$\FT\text{-}\fd_R(R/\mathfrak p)\le n$.
\item[(3)] For every maximal ideal $\mathfrak m$ of $R$, one has
$\FT\text{-}\fd_R(R/\mathfrak m)\le n$.
\item[(4)] For all $R$-modules $M,N\in \FPR(R)$, one has
$\Ext_R^{n+1}(M,N)=0$.
\end{enumerate}
If, in addition, condition \textup{(5)} below is known to imply \textup{(1)}, then it is
also equivalent to \textup{(1)}--\textup{(4)}:
\begin{enumerate}
\item[(5)] For every $R$-module $M\in \FPR(R)$ and every integer $m\ge 0$, one has
$\Ext_R^{n+1+m}(M,R)=0$.
\end{enumerate}
\end{theorem}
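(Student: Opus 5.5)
The plan is to prove the cycle (1)$\Rightarrow$(2)$\Rightarrow$(3)$\Rightarrow$(1), and then handle (4) and (5) separately. Throughout I will use the Tor characterization of \cite[Proposition~4.7]{WZKXS2020}: $\FT\text{-}\fd_R M\le n$ if and only if $\Tor_{n+1}^R(M,N)=0$ for all $N\in\FPR(R)$.

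\emph{The cycle (1)$\Rightarrow$(2)$\Rightarrow$(3)$\Rightarrow$(1).}

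For (1)$\Rightarrow$(2), I will invoke the result of \cite{WZKXS2020} recalled in the introduction: $\fPD(R)\le n$ is equivalent to $\FT\text{-}\dim(R)\le n$. So every $R$-module, in particular each $R/\mathfrak p$, has $\FT$-flat dimension at most $n$.

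The implication (2)$\Rightarrow$(3) is trivial.

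For (3)$\Rightarrow$(1), let $N\in\FPR(R)$ and set $K=\Omega^n N$, the $n$-th syzygy in a finite resolution by finitely generated projectives. By Lemma~\ref{lem:FPR-basic}, $K\in\FPR(R)$, and in particular $K$ is finitely presented. Dimension shifting gives
\[
\Tor_1^R(R/\mathfrak m,K)\cong\Tor_{n+1}^R(R/\mathfrak m,N)=0
\]
for every $\mathfrak m\in\Max(R)$, using (3) and the Tor characterization. Localizing at $\mathfrak m$ yields $\Tor_1^{R_\mathfrak m}(k(\mathfrak m),K_\mathfrak m)=0$. For a finitely presented module over a local ring, this vanishing forces $K_\mathfrak m$ to be free, by the standard minimal-generator/Nakayama argument. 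Hence $K$ is finitely presented and locally free, so it is projective. Therefore $\pd_R N\le n$, which gives $\fPD(R)\le n$.

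\emph{The equivalence (1)$\Leftrightarrow$(4).}

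For (1)$\Rightarrow$(4): each $M\in\FPR(R)$ has $\pd_R M\le n$, so $\Ext^{n+1}_R(M,N)=0$ for every $N$, in particular for $N\in\FPR(R)$.

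For (4)$\Rightarrow$(1): take $M\in\FPR(R)$ with resolution $0\to P_k\to\cdots\to P_0\to M\to 0$, and suppose $k>n$ is minimal. Put $K_{k-1}=\operatorname{im}(P_k\to P_{k-1})\cong P_k$, so we have the sequence $0\to P_k\to P_{k-1}\to\Omega^{k-1}M\to0$. Since $\Omega^{k-1}M\in\FPR(R)$ by Lemma~\ref{lem:FPR-basic}, I can apply (4) to $\Omega^{k-n-1}M\in\FPR(R)$ against $P_k\in\FPR(R)$:
\[
\Ext^1_R(\Omega^{k-1}M,P_k)\cong\Ext^{n+1}_R(\Omega^{k-n-1}M,P_k)=0 .
\]
Hence the sequence splits, so $\Omega^{k-1}M$ is projective and $\pd_R M\le k-1$. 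This contradicts the minimality of $k$, so $k\le n$.

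\emph{Condition (5).}

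The implication (1)$\Rightarrow$(5) is immediate, since $\pd_R M\le n$ kills all $\Ext^{>n}_R(M,-)$. The reverse implication is granted by hypothesis in the statement, so nothing more is needed there. If one wanted to prove it directly, the same splitting argument works with $P_k$ replaced by the finitely generated projective $P_k$ viewed as a summand of $R^r$, because $\Ext$ commutes with finite direct sums in the second variable. I would remark on this, but it is optional.

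\emph{Main obstacle.}

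I expect the main obstacle to be (3)$\Rightarrow$(1), specifically the local step: passing from $\Tor_1$-vanishing against residue fields to projectivity. This requires $K$ to be finitely presented, and that is exactly what the membership $K\in\FPR(R)$ from Lemma~\ref{lem:FPR-basic} supplies.
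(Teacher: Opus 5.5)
Your proposal is correct. For (1)$\Rightarrow$(2)$\Rightarrow$(3) and (3)$\Rightarrow$(1) you follow the paper essentially verbatim. The paper also takes the $n$-th syzygy, localizes the vanishing of $\Tor_1$ against $R/\mathfrak m$, and concludes projectivity because the module is finitely presented and locally free.

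The real difference is in (4)$\Rightarrow$(1). The paper takes the $n$-th syzygy $K^n\in\FPR(R)$ of $M$ and chooses $0\to A\to F\to K^n\to 0$ with $F$ finitely generated free, so that $A\in\FPR(R)$ by Lemma~\ref{lem:FPR-basic}. It then splits this sequence via $\Ext^1_R(K^n,A)\cong\Ext^{n+1}_R(M,A)=0$. This gives projectivity of $K^n$ in one step, with no minimality argument. However, it uses (4) with a second argument $A$ that is an arbitrary member of $\FPR(R)$, typically not projective.

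You instead split the last step $0\to P_k\to P_{k-1}\to\Omega^{k-1}M\to 0$ of a resolution of minimal length $k>n$. Your dimension shift $\Ext^1_R(\Omega^{k-1}M,P_k)\cong\Ext^{n+1}_R(\Omega^{k-n-1}M,P_k)$ is valid because $k-n-1\ge 0$. The splitting makes $\Omega^{k-1}M$ a finitely generated projective summand of $P_{k-1}$, which gives a shorter resolution and the desired contradiction. So you only ever test (4) against finitely generated projective modules $N=P_k$.

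That is what your route buys. Since $\Ext^{n+1}_R(X,P_k)$ is a direct summand of $\Ext^{n+1}_R(X,R)^r$ when $P_k$ is a summand of $R^r$, your argument proves (5)$\Rightarrow$(1) outright. It even suffices to know $\Ext^{n+1}_R(X,R)=0$ for all $X\in\FPR(R)$, which is the case $m=0$ of (5) applied to the syzygies $X=\Omega^{k-n-1}M\in\FPR(R)$. So the conditional hedge the paper attaches to (5) is unnecessary. The paper's proof as written does not yield this, because its test module $A$ need not be a summand of a free module.
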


\begin{proof}
The implications \textup{(1)$\Rightarrow$(2)$\Rightarrow$(3)} and
\textup{(1)$\Rightarrow$(4)} are straightforward. We prove
\textup{(3)$\Rightarrow$(1)} and \textup{(4)$\Rightarrow$(1)}.

\medskip
\noindent\textup{(3)$\Rightarrow$(1).}
Let $M\in\FPR(R)$. Choose an exact sequence
\[
0\longrightarrow K^{n}\longrightarrow P^{n-1}\longrightarrow \cdots
\longrightarrow P^{1}\longrightarrow P^{0}\longrightarrow M\longrightarrow 0,
\]
where each $P^{i}$ is a finitely generated projective $R$-module. In particular,
$K^n$ is finitely presented.

Let $\mathfrak m\in\Max(R)$. Since $\FT\text{-}\fd_R(R/\mathfrak m)\le n$, we have
$\Tor_{n+1}^R(M,R/\mathfrak m)=0.$
By dimension shifting,
$\Tor_1^R(K^n,R/\mathfrak m)=0.$
Localizing at $\mathfrak m$, we obtain
$\Tor_1^{R_{\mathfrak m}}(K^n_{\mathfrak m},R_{\mathfrak m}/\mathfrak mR_{\mathfrak m})=0.$
Since $K^n_{\mathfrak m}$ is finitely presented over the local ring $R_{\mathfrak m}$,
it follows from \cite[Theorem~3.4.13]{WK} that $K^n_{\mathfrak m}$ is free over
$R_{\mathfrak m}$. Hence $K^n$ is a finitely presented locally free $R$-module, and
therefore $K^n$ is projective. Thus $\pd_R M\le n$, and so $\fPD(R)\le n$.

\medskip
\noindent\textup{(4)$\Rightarrow$(1).}
Let $M\in \FPR(R)$, and choose an exact sequence
\[
0\longrightarrow K^{n}\longrightarrow P^{n-1}\longrightarrow \cdots
\longrightarrow P^{1}\longrightarrow P^{0}\longrightarrow M\longrightarrow 0,
\]
where each $P^{i}$ is finitely generated projective. By
Lemma~\ref{lem:FPR-basic}, we have $K^{n}\in \FPR(R)$. Take a short exact sequence
$0\longrightarrow A\longrightarrow F\longrightarrow K^{n}\longrightarrow 0,$
where $F$ is finitely generated free. Again by Lemma~\ref{lem:FPR-basic},
$A\in \FPR(R)$. By dimension shifting,
$\Ext_R^{1}(K^{n},A)\cong \Ext_R^{n+1}(M,A)=0$
by \textup{(4)}. Hence the above short exact sequence splits, so $K^n$ is projective.
Therefore $\pd_R M\le n$, and thus $\fPD(R)\le n$.
\end{proof}

Before establishing the quotient-ring theorem for the small finitistic dimension, we
first prove the following lemma.

\begin{lemma}\label{lem:quotient-modules-basic}
Let $R$ be a ring and let $a$ be a non-zero-divisor of $R$. Set $\overline{R}=R/aR$.
\begin{enumerate}
\item[(1)]
Every finitely generated projective $\overline{R}$-module $M$ is finitely presented as an
$R$-module and satisfies $\pd_R M\le 1$. More generally, every finitely generated
$\overline{R}$-module is finitely generated as an $R$-module.
\item[(2)]
Every finitely presented $\overline{R}$-module $M$ is finitely presented as an $R$-module.
\item[(3)]
For every nonzero $\overline{R}$-module $M\in \FPR(\overline{R})$, we have
$M\in \FPR(R)$.
\end{enumerate}
\end{lemma}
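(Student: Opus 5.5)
The plan is to exploit the exact sequence
\[
0\longrightarrow R\xrightarrow{\cdot a} R\longrightarrow \overline{R}\longrightarrow 0,
\]
which is exact because $a$ is a non-zero-divisor. It shows directly that $\overline{R}\in\FPR(R)$ with $\pd_R\overline{R}\le 1$. All three parts then come from the closure properties in Lemma~\ref{lem:FPR-basic}.

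For (1), let $M$ be a finitely generated projective $\overline{R}$-module. Then $M$ is a direct summand of $\overline{R}^k$ for some $k$, and by the sequence above $\overline{R}^k\in\FPR(R)$ with $\pd_R\overline{R}^k\le 1$. A direct summand of a module of projective dimension at most $1$ has projective dimension at most $1$, so $\pd_R M\le 1$. For finite presentation, write $\overline{R}^k=M\oplus M'$, so that $0\to M'\to\overline{R}^k\to M\to 0$ is exact. Here $M'$ is a quotient of $\overline{R}^k$, hence finitely generated over $R$, and $\overline{R}^k$ is finitely presented over $R$. The standard fact that a quotient of a finitely presented module by a finitely generated submodule is finitely presented then gives that $M$ is finitely presented over $R$. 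The general claim is immediate: an $\overline{R}$-generating set is also an $R$-generating set, since the action factors through $R\to\overline{R}$.

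For (2), take a presentation $\overline{R}^m\to\overline{R}^k\to M\to 0$ over $\overline{R}$. The kernel $K$ of $\overline{R}^k\to M$ is a quotient of $\overline{R}^m$, hence finitely generated over $R$. Since $\overline{R}^k$ is finitely presented over $R$, the same standard fact shows that $M\cong\overline{R}^k/K$ is finitely presented over $R$.

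For (3), take a resolution
\[
0\to P_n\to\cdots\to P_0\to M\to 0
\]
by finitely generated projective $\overline{R}$-modules. The main step is to show that each $P_i$ lies in $\FPR(R)$, which I expect to be the only real obstacle, because $\FPR(R)$ need not be closed under direct summands. I would handle it as follows. Choose $Q$ with $P_i\oplus Q\cong\overline{R}^k$; by the Eilenberg swindle-type trick, replacing $Q$ by $Q\oplus P_i\oplus Q\oplus\cdots$ is not available for finitely generated modules, so instead I would first try to lift. Since $R$ need not be complete, I would use instead the $R$-resolution directly. There is a surjection $R^k\to\overline{R}^k\to P_i$, whose kernel $L$ satisfies $L_{}\supseteq aR^k$. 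The sequence $0\to L/aR^k\to\overline{R}^k\to P_i\to 0$ splits, so $L/aR^k\cong Q$ is $\overline{R}$-projective. This gives $\pd_R Q\le 1$, and $Q$ is finitely presented by (1). Hence $L$ is finitely generated, and $L$ is projective, because $\pd_R P_i\le 1$ and $0\to L\to R^k\to P_i\to 0$ is exact. Thus $0\to L\to R^k\to P_i\to 0$ is a resolution by finitely generated projective $R$-modules, so $P_i\in\FPR(R)$.

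Once every $P_i$ lies in $\FPR(R)$, apply Lemma~\ref{lem:FPR-basic}(2) to the displayed resolution with $K=0$, which is in $\FPR(R)$. This yields $M\in\FPR(R)$. The nonzero hypothesis plays no role in this argument.
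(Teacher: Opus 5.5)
Your proposal is correct and follows the paper's proof. For (1) you use the sequence $0\to R\xrightarrow{\cdot a}R\to\overline{R}\to 0$ together with direct summands; for (2) you use the fact that a finitely presented module modulo a finitely generated submodule is finitely presented (the paper cites Glaz's Theorem 2.1.2(2) for this); and for (3) you show each $P_i\in\FPR(R)$ and then apply Lemma~\ref{lem:FPR-basic}(2).

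Your explicit resolution $0\to L\to R^k\to P_i\to 0$ only spells out the paper's terse ``by (1)'': finite presentation together with $\pd_R P_i\le 1$ already forces $L$ to be finitely generated projective. The concern that motivated your detour is also unfounded. Since $\FPR(R)$ is defined using finitely generated projective modules rather than free ones, it is closed under direct summands.
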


\begin{proof}
\textup{(1)}
Since $a$ is a non-zero-divisor of $R$, the sequence
$0\longrightarrow R \xrightarrow{\cdot a} R \longrightarrow \overline{R}\longrightarrow 0$
is exact. Hence $\overline{R}$ is finitely presented as an $R$-module and
$\pd_R(\overline{R})=1.$
Therefore every finitely generated free $\overline{R}$-module
$F=\bigoplus_{i=1}^{n}\overline{R}$
is finitely presented as an $R$-module and satisfies $\pd_R F\le 1$.
Now let $M$ be a finitely generated projective $\overline{R}$-module. Then $M$ is a direct
summand of such an $F$. Consequently, $M$ is finitely presented as an $R$-module and
$\pd_R M\le 1.$

For the more general assertion, let $N$ be a finitely generated $\overline{R}$-module.
Then there exists a finitely generated free $\overline{R}$-module $F$ and a surjection
$F\longrightarrow N\longrightarrow 0.$
Since $F$ is finitely generated as an $R$-module, so is $N$.

\medskip
\textup{(2)}
Let
$0\longrightarrow K\longrightarrow P\longrightarrow M\longrightarrow 0$
be a short exact sequence of $\overline{R}$-modules with $P$ finitely generated projective
and $K$ finitely generated. By \textup{(1)}, both $K$ and $M$ are finitely generated as
$R$-modules, and $P$ is finitely presented as an $R$-module. Hence
\cite[Theorem~2.1.2(2)]{Gl-C89} implies that $M$ is finitely presented as an $R$-module.

\medskip
\textup{(3)}
Let $M\in \FPR(\overline{R})$. Then there exists an exact sequence of $\overline{R}$-modules
\[
0\longrightarrow P^{n}\longrightarrow P^{n-1}\longrightarrow \cdots
\longrightarrow P^{1}\longrightarrow P^{0}\longrightarrow M\longrightarrow 0,
\]
where each $P^{i}$ is a finitely generated projective $\overline{R}$-module.
By \textup{(1)}, each $P^{i}$ belongs to $\FPR(R)$. Therefore, applying
Lemma~\ref{lem:FPR-basic}\,\textup{(2)} repeatedly, we conclude that
$M\in \FPR(R).$
\end{proof}

\begin{theorem}\label{thm:fPD-quotient}
Let $R$ be a ring and let $a\in R$ be neither a zero-divisor nor a unit. Set
$\overline{R}=R/aR$. If $\fPD(\overline{R})<\infty$, then
$\fPD(R)\ge \fPD(\overline{R})+1.$
\end{theorem}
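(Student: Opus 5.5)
Set $n=\fPD(\overline{R})<\infty$; we may assume $\overline{R}\neq 0$, which holds because $a$ is not a unit. The plan is to produce a module $M\in\FPR(R)$ with $\pd_R M=n+1$. Pick $M\in\FPR(\overline{R})$ with $\pd_{\overline{R}}M=n$. When $n=0$ we may take $M=\overline{R}$ itself, since $\pd_R\overline{R}=1$ by Lemma~\ref{lem:quotient-modules-basic}(1).

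By Lemma~\ref{lem:quotient-modules-basic}(3), $M\in\FPR(R)$, so $\pd_R M\le\fPD(R)$. It therefore suffices to show $\pd_R M=\pd_{\overline{R}}M+1$. This is the classical second change-of-rings theorem (Kaplansky): if $a$ is a non-zero-divisor on $R$, $aM=0$, and $\pd_{\overline{R}}M<\infty$, then $\pd_R M=\pd_{\overline{R}}M+1$. I would prove it directly by induction on $n=\pd_{\overline{R}}M$, in the form needed here.

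\emph{Base case $n=0$.} Here $M$ is a nonzero projective $\overline{R}$-module, so it is a summand of a free $\overline{R}$-module, and $\pd_R M\le 1$. Suppose $M$ were $R$-projective. Then $M$ embeds in a free $R$-module, on which $a$ acts injectively. But $aM=0$ and $M\ne0$, a contradiction. Hence $\pd_R M=1$.

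\emph{Inductive step $n\ge1$.} Take $0\to K\to F\to M\to 0$ over $\overline{R}$ with $F$ finitely generated free. Then $\pd_{\overline{R}}K=n-1$, and $K\neq0$ because $M$ is not $\overline{R}$-projective. By induction, $\pd_R K=n$, while $\pd_R F=1$. The $\Ext$ long exact sequence, or Lemma~\ref{lem:FT-SES}(2) adapted to projective dimension, gives: since $\pd_R F=1<n+1$ and $\pd_R K=n\ge1$, we get $\pd_R M=\pd_R K+1=n+1$ as long as $n\ge1$. The $\pd$-analogue of Lemma~\ref{lem:FT-SES}(2) would be checked with $\Ext$ instead of $\Tor$ by the same argument. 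The one delicate case is $n=1$, where $\pd_R K=1=\pd_R F$.

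\emph{The main obstacle is this $n=1$ case.} There I would argue directly that $\pd_R M\le1$ is impossible. It would force $K\to F$ to induce an injection $\Ext^1_R(F,-)\leftarrow\Ext^1_R(K,-)$ failure. The cleaner route is the standard identity $\Ext^{i+1}_R(M,N)\cong\Ext^i_{\overline{R}}(M,N)$ for $\overline{R}$-modules $N$, valid because $a$ is a non-zero-divisor on $R$. Choosing $N$ with $\Ext^n_{\overline{R}}(M,N)\ne0$ gives $\Ext^{n+1}_R(M,N)\ne0$, so $\pd_R M\ge n+1$.

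The upper bound $\pd_R M\le n+1$ follows from splicing the finite $\overline{R}$-resolution, each term of which has $\pd_R\le1$ by Lemma~\ref{lem:quotient-modules-basic}(1). Thus I would actually lead with the Rees-type $\Ext$ isomorphism, which uniformly handles all $n$, and use the induction only as backup. This yields $\fPD(R)\ge\pd_R M=n+1$.
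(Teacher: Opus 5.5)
Your overall argument is the paper's. You pick $M\in\FPR(\overline R)$ with $\pd_{\overline R}M=n$, get $M\in\FPR(R)$ from Lemma~\ref{lem:quotient-modules-basic}(3), and conclude from the second change-of-rings theorem $\pd_R M=\pd_{\overline R}M+1$, which the paper simply cites from Wang--Kim rather than proving. Your choice of $M$ straight from the definition (a finite supremum of integers is attained) is more direct than the paper's detour through an $\overline R$-module $A$ with $\FT\text{-}\fd_{\overline R}A=n$ and a $\Tor$-witness $C$. If you just cite Kaplansky's theorem, your proof is complete.

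The problem is the self-contained proof you say you would lead with. The identity $\Ext^{i+1}_R(M,N)\cong\Ext^i_{\overline R}(M,N)$ for all $\overline R$-modules $N$ is false in general, even when $\pd_{\overline R}M<\infty$. Take $R=k[x,y]$, $a=x$, $M=N=k$. Then $\pd_{k[y]}k=1$, but the Koszul complex gives $\Ext^1_R(k,k)\cong k^2$, whereas $\mathrm{Hom}_{k[y]}(k,k)\cong k$. Rees's lemma requires the second variable to be an $R$-module $X$ on which $a$ is regular, and it reads $\Ext^{i+1}_R(M,X)\cong\Ext^i_{\overline R}(M,X/aX)$.

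For $\overline R$-modules $N$, only the top-degree case $i=n=\pd_{\overline R}M$ holds. That is all you use, but it needs an argument. One route:
\begin{itemize}
\item For $\overline R$-projective $P$, one has $\Ext^1_R(P,N)\cong\mathrm{Hom}_{\overline R}(P,N)$ naturally in $P$. Compute it with $0\to R\xrightarrow{a}R\to\overline R\to0$ and use $aN=0$.
\item Compare the long exact sequences of $0\to K\to F\to M\to 0$ over $R$ and over $\overline R$. Induction on $n$ then gives $\Ext^{n+1}_R(M,N)\cong\Ext^n_{\overline R}(M,N)$.
\item For $n=1$ and $N=K$, this is what your garbled sentence about $\Ext^1_R(F,-)$ and $\Ext^1_R(K,-)$ was reaching for. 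If $\pd_R M\le 1$, then $\mathrm{Hom}_{\overline R}(F,K)\to\mathrm{Hom}_{\overline R}(K,K)$ is surjective, so $K\to F$ splits and $M$ is $\overline R$-projective, a contradiction.
\end{itemize}

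Alternatively, apply the genuine Rees lemma with $X=R^m$. Choose $m$ so that the last term $P_n$ of a length-$n$ resolution of $M$ by finitely generated projectives is a summand of $\overline R^m$. For $n\ge1$, $\Ext^n_{\overline R}(M,\overline R^m)\neq 0$ because $P_n\to P_{n-1}$ does not split, and hence $\Ext^{n+1}_R(M,R^m)\neq 0$.

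Also, in your inductive step the comparison that matters is $\pd_R F<\pd_R K$, i.e.\ $n\ge 2$, not $\pd_R F<n+1$.
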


\begin{proof}
Let $n=\fPD(\overline{R})<\infty$.

If $n=0$, then $\overline{R}\in \FPR(\overline{R})$, and since $a$ is a non-zero-divisor,
the sequence
$0\longrightarrow R \xrightarrow{\cdot a} R \longrightarrow \overline{R}\longrightarrow 0$
is exact. Hence
$\pd_R(\overline{R})=1.$
By Lemma~\ref{lem:quotient-modules-basic}, we have $\overline{R}\in \FPR(R)$. Therefore
$\fPD(R)\ge 1=\fPD(\overline{R})+1.$

Now assume that $n\ge 1$. Since $\fPD(\overline{R})=n$, there exists a nonzero
$\overline{R}$-module $A$ such that
$\FT\text{-}\fd_{\overline{R}}A=n.$
By \cite[Proposition~4.7]{WZKXS2020}, there exists a module
$C\in \FPR(\overline{R})$ such that
$\Tor_n^{\overline{R}}(A,C)\ne 0.$
Since $n=\fPD(\overline{R})$, we have $\pd_{\overline{R}}C\le n$. If
$\pd_{\overline{R}}C<n$, then
$\Tor_n^{\overline{R}}(A,C)=0,$
a contradiction. Hence
$\pd_{\overline{R}}C=n.$

By \cite[Theorem~3.8.12(2)]{WK}, it follows that
$\pd_R C=n+1.$
Moreover, by Lemma~\ref{lem:quotient-modules-basic}, we have $C\in \FPR(R)$. Therefore
\[
\fPD(R)\ge n+1=\fPD(\overline{R})+1.
\]
This completes the proof.
\end{proof}

In \cite{zhang2024}, Zhang studied the small finitistic dimension for four classes of ring constructions: polynomial rings, formal power series rings, trivial extensions, and amalgamations. In particular, it was shown that, for any ring $R$,
$\fPD(R[x]) \ge \fPD(R)+1.$
If, in addition, $R$ is a Hilbert ring or a Noetherian ring, then equality holds, and the argument relies on the Koszul grade.

\medskip

We next establish the polynomial-ring theorem for the small finitistic dimension in the
finite case.

\begin{theorem}\label{thm:fPD-polynomial}
Let $R$ be a ring with $\fPD(R)<\infty$, and let
$x_1,x_2,\ldots,x_m$ be indeterminates over $R$, where $m\ge 1$. Then
\[
\fPD\big(R[x_1,\ldots,x_m]\big)=\fPD(R)+m.
\]
\end{theorem}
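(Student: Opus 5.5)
We argue by induction on $m$, so everything reduces to the case $m=1$: we show that $\fPD(R[x])=\fPD(R)+1$ whenever $n:=\fPD(R)<\infty$. The upper bound obtained in this case shows in particular that $\fPD(R[x])<\infty$. Hence the hypothesis is inherited by $R[x_1,\ldots,x_{m-1}]$, and applying the case $m=1$ to the ring $R[x_1,\ldots,x_{m-1}]$ gives the general statement.

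\emph{Lower bound.} The indeterminate $x$ is a non-zero-divisor and a non-unit of $R[x]$, and $R[x]/xR[x]\cong R$. Since $\fPD(R)<\infty$, Theorem~\ref{thm:fPD-quotient} applies directly and gives $\fPD(R[x])\ge \fPD(R)+1$. This also agrees with Zhang's inequality quoted above.

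\emph{Upper bound.} Let $M\in\FPR(R[x])$. We use the standard exact sequence of $R[x]$-modules
\[
0\longrightarrow R[x]\otimes_R M\longrightarrow R[x]\otimes_R M\longrightarrow M\longrightarrow 0,
\]
whose first map is $f\otimes u\mapsto fx\otimes u-f\otimes xu$. For $N\in\FPR(R[x])$, flat base change gives $\Tor_k^{R[x]}(R[x]\otimes_R M,N)\cong\Tor_k^R(M,N)$. The long exact $\Tor$ sequence then reduces the claim to the following vanishing statement:
\[
\Tor_k^R(M,N)=0\quad\text{for all } k\ge n+1 .
\]
Granting this, we get $\Tor_{k}^{R[x]}(M,N)=0$ for $k\ge n+2$. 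By \cite[Proposition~4.7]{WZKXS2020}, this means $\FT\text{-}\fd_{R[x]}M\le n+1$ for every $R[x]$-module $M$, at least for those in $\FPR(R[x])$. Feeding this into Theorem~\ref{thm:fPD-characterizations} yields $\fPD(R[x])\le n+1$. A cleaner version of the same idea is to test only against $N=R[x]/\mathfrak m$ via condition~(3), which is enough to bound $\pd_{R[x]}M$.

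\emph{Main obstacle.} The hard step is the vanishing of $\Tor_k^R(M,N)$. The difficulty is that neither $M$ nor $N$ lies in $\FPR(R)$; compare Example~\ref{ex:localization-not-FPR}. So the equality $\FT\text{-}\dim(R)=n$ cannot be applied naively. My first attempt is to use the grading. As an $R$-module, $M$ is the direct limit of the finitely generated $R$-submodules $M_{\le d}$ spanned by $x^iG$, $i\le d$, where $G$ is a finite generating set. One would then show that these pieces, or suitable truncations of a finite $R[x]$-free resolution of $M$, give modules in $\FPR(R)$ of projective dimension at most $n$. Since $\Tor$ commutes with direct limits, this would reduce the vanishing to the case where one argument lies in $\FPR(R)$, where $\FT\text{-}\fd_R\le n$ applies.

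For the variant with $N=R[x]/\mathfrak m$, put $\mathfrak p=\mathfrak m\cap R$. Then $R[x]/\mathfrak m$ is a vector space over $k(\mathfrak p)$, so $\Tor$ against it reduces to $\Tor^{R_{\mathfrak p}}_k(M_{\mathfrak p},k(\mathfrak p))$. This again has to be controlled by the same filtration argument, combined with Theorem~\ref{thm:fPD-characterizations}(2) applied to $R/\mathfrak p$.
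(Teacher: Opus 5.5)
You have located the crux correctly, but the proposal does not prove it, so the upper bound is not established.

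\textbf{What is correct.} Your reduction to $m=1$ is right. So is the lower bound via Theorem~\ref{thm:fPD-quotient} applied to $R\cong R[x]/xR[x]$. Reducing the upper bound through $0\to R[x]\otimes_R M\to R[x]\otimes_R M\to M\to 0$ and flat base change is also correct. All of this coincides with the paper.

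\textbf{The gap.} The upper bound rests entirely on $\Tor_k^R(M,N)=0$ for $k\ge n+1$, and for this you give only a plan. Since $\fPD(R)=n$, every module in $\FPR(R)$ has projective dimension at most $n$. So the natural route is to show that a module in $\FPR(R[x])$, viewed over $R$, lies in $\varinjlim\FPR(R)$, or at least has $R$-flat dimension at most $n$. Your filtration does not deliver this, for two reasons.
\begin{enumerate}
\item[(i)] The finitely generated $R$-submodules $M_{\le d}$ need not even be finitely presented when $R$ is not coherent.
\item[(ii)] The degree truncations of a finite free $R[x]$-resolution $0\to F_r\to\cdots\to F_0\to M\to 0$ are finite complexes of finitely generated free $R$-modules. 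They stay injective on the left but are in general not exact in the middle. A bounded-degree cycle may bound only chains of much larger degree, since leading coefficients that are zero-divisors can cancel. Meanwhile, compatibility with the differentials pushes the degree offsets the other way. So the cokernels need not lie in $\FPR(R)$.
\end{enumerate}
The idea does work when $\pd_{R[x]}M\le 1$: then $0\to F_{1,\le d}\to F_{0,\le d+e}$ is exact and $M$ is the direct limit of the cokernels. It does not extend beyond that.

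Your variant with $R[x]/\mathfrak m\cong k(\mathfrak p)^{(I)}$ leads back to the same question. Separately, a bound on $\FT\text{-}\fd_{R[x]}M$ only for $M\in\FPR(R[x])$ does not plug into Theorem~\ref{thm:fPD-characterizations} as stated. The residue-field variant, modelled on the proof of \textup{(3)}$\Rightarrow$\textup{(1)}, is the right way to conclude once the vanishing is known.

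\textbf{Comparison with the paper.} The paper's proof follows exactly your outline. It crosses this point by writing $\FT\text{-}\fd_{R[x]}N[x]=\FT\text{-}\fd_R N$. Theorems~\ref{thm:FT-direct-sum} and~\ref{thm:FT-change-of-rings}\textup{(1)} give only the inequality $\ge$. The inequality $\le$ is the one actually used, and it is precisely your missing vanishing statement. In the form used (the bound $\fPD(R)$ for every $N$), it is equivalent to $\fd_R L\le\fPD(R)$ for every $L\in\FPR(R[x])$. The paper gives no argument for it.

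So neither your proposal nor the paper's text proves this step. The paper itself recalls that Zhang obtained the equality only for Noetherian or Hilbert rings, using Koszul grade. You should therefore treat it as a genuine missing idea rather than a routine verification.
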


\begin{proof}
It suffices to prove the case $m=1$. Set $s=\fPD(R)<\infty$.

Let $N$ be an $R[x_1]$-module. Using the standard exact sequence
\[
0\longrightarrow N[x_1]\longrightarrow N[x_1]\longrightarrow N\longrightarrow 0
\]
(see \cite[Lemma~9.29]{Ro79}), together with Lemma~\ref{lem:FT-SES} and
Theorem~\ref{thm:FT-change-of-rings}, we obtain
\[
\FT\text{-}\fd_R N
\le
\FT\text{-}\fd_{R[x_1]} N
\le
1+\FT\text{-}\fd_{R[x_1]} N[x_1]
=
1+\FT\text{-}\fd_R N.
\]
Since $\fPD(R)=\FT\text{-}\dim(R)=s$, it follows that
$\FT\text{-}\fd_R N\le s$
for every $R[x_1]$-module $N$. Hence
$\FT\text{-}\fd_{R[x_1]} N\le s+1$
for every $R[x_1]$-module $N$. Therefore
$\fPD(R[x_1])\le s+1.$

On the other hand, since
$R\cong R[x_1]/x_1R[x_1],$
Theorem~\ref{thm:fPD-quotient} yields
$\fPD(R[x_1])\ge \fPD(R)+1=s+1.$
Consequently,
$\fPD(R[x_1])=\fPD(R)+1.$
The general case now follows by induction on $m$.
\end{proof}

Let $R$ be a ring and let $x$ be an indeterminate over $R$. For an $R[x]$-module
$L\in \FPR(R[x])$, it is not necessarily true that $L\in \FPR(R)$.

\begin{example}\label{ex:Rx-not-FPR-over-R}\rm
Take $L=R[x]$. Then $L\in \FPR(R[x])$, since $R[x]$ is a free $R[x]$-module of rank one.
However, viewed as an $R$-module,
\[
R[x]=R\oplus Rx\oplus Rx^2\oplus \cdots,
\]
so $R[x]$ is not finitely generated as an $R$-module. Since every module in $\FPR(R)$ is
finitely generated as an $R$-module, it follows that
$L\notin \FPR(R).$
\end{example}

\begin{proposition}\label{prop:fPD-localization-ineq}
Let $R$ be a ring. Then
\[
\fPD(R)\le \sup\{\,\fPD(R_{\mathfrak m})\mid \mathfrak m\in\Max(R)\,\}.
\]
Also,
\[
\fPD(R)\le \sup\{\,\fPD(R_{\mathfrak p})\mid \mathfrak p\in\Spec(R)\,\}.
\]
\end{proposition}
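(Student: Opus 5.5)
We argue directly with modules in $\FPR(R)$, without passing through the $\FT$-dimension. Set $s=\sup\{\fPD(R_{\mathfrak m})\mid \mathfrak m\in\Max(R)\}$. If $s=\infty$ there is nothing to prove, so assume $s<\infty$. Let $M\in\FPR(R)$. We must show $\pd_R M\le s$.

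Choose a resolution $0\to P_k\to\cdots\to P_0\to M\to 0$ by finitely generated projective $R$-modules. Localizing at $\mathfrak m\in\Max(R)$ gives a resolution of $M_{\mathfrak m}$ by finitely generated projective $R_{\mathfrak m}$-modules, exactly as in the proof of Proposition~\ref{prop:FT-flat-local}. Hence $M_{\mathfrak m}\in\FPR(R_{\mathfrak m})$, and so $\pd_{R_{\mathfrak m}}M_{\mathfrak m}\le \fPD(R_{\mathfrak m})\le s$.

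Now take the $s$-th syzygy $K$, sitting in
\[
0\to K\to P_{s-1}\to\cdots\to P_0\to M\to 0 .
\]
Here $K\in\FPR(R)$ by Lemma~\ref{lem:FPR-basic}(2), so in particular $K$ is finitely presented. Localization is exact and carries projectives to projectives, so $K_{\mathfrak m}$ is the $s$-th syzygy of $M_{\mathfrak m}$ in a projective resolution over $R_{\mathfrak m}$. Since $\pd_{R_{\mathfrak m}}M_{\mathfrak m}\le s$, Schanuel's lemma shows that $K_{\mathfrak m}$ is projective, hence free over the local ring $R_{\mathfrak m}$. Thus $K$ is finitely presented and locally free, and therefore projective, as already used in the proof of Theorem~\ref{thm:fPD-characterizations} (3)$\Rightarrow$(1). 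Consequently $\pd_R M\le s$, and so $\fPD(R)\le s$.

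An alternative route is through the $\FT$-dimension: by \cite{WZKXS2020} we have $\fPD(R)=\FT\text{-}\dim(R)$, and Proposition~\ref{prop:FTfd-localization-ineq} bounds $\FT\text{-}\fd_R N$ by $\sup_{\mathfrak m}\FT\text{-}\fd_{R_{\mathfrak m}}N_{\mathfrak m}\le\sup_{\mathfrak m}\fPD(R_{\mathfrak m})$. Either route works. The direct one avoids any subtlety about whether $N_{\mathfrak m}$ is an arbitrary $R_{\mathfrak m}$-module, so I would present it.

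The second inequality, over $\Spec(R)$, follows immediately from the first, since $\Max(R)\subseteq\Spec(R)$ and so the supremum over primes is at least the supremum over maximal ideals. Alternatively, the same argument works verbatim with primes, because a finitely presented module that is free at every prime is projective.

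The only point needing care is the step that a finitely presented, locally free module is projective. This is a standard fact, already invoked earlier in the paper, so I do not expect a real obstacle.
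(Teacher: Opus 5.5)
Your argument is correct, but it takes a different route from the paper. The paper never localizes the module $M\in\FPR(R)$; it localizes the test module $R/\mathfrak m$ instead. Its steps are:
\begin{itemize}
\item it applies Theorem~\ref{thm:fPD-characterizations}, (1)$\Rightarrow$(3), over each $R_{\mathfrak m}$ to get $\FT\text{-}\fd_{R_{\mathfrak m}}(R_{\mathfrak m}/\mathfrak mR_{\mathfrak m})\le k$;
\item it uses $(R/\mathfrak m)_{\mathfrak n}=0$ for $\mathfrak n\ne\mathfrak m$, together with Proposition~\ref{prop:FTfd-localization-ineq}, to get $\FT\text{-}\fd_R(R/\mathfrak m)\le k$;
\item it returns to $\fPD(R)\le k$ via (3)$\Rightarrow$(1).
\end{itemize}
The proof of (3)$\Rightarrow$(1) itself shows that the $k$-th syzygy of any $M\in\FPR(R)$ is finitely presented and locally free. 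So the two proofs share the same core, but yours reaches it in one step, with no $\FT$-flat machinery and no Tor criterion. Your reduction of the $\Spec$ inequality to the $\Max$ one via $\Max(R)\subseteq\Spec(R)$ is also cleaner than the paper's ``analogous''. What the paper's route buys is the observation that the residue fields $R/\mathfrak m$ alone detect the bound, in keeping with its $\FT$-flat viewpoint.

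Your stated reason for preferring the direct route is slightly off. $\FT\text{-}\dim(R_{\mathfrak m})$ is a supremum over all $R_{\mathfrak m}$-modules, so there is no subtlety about $N_{\mathfrak m}$. The more genuine advantage is this. Both your alternative and the paper's proof invoke Proposition~\ref{prop:FTfd-localization-ineq} where the left-hand side might a priori be infinite. The written proof of that proposition dismisses the case $\FT\text{-}\fd_R M=\infty$ as ``nothing to prove'', although that is exactly the case that must be excluded. The same localize-the-Tor argument does cover it: if $\FT\text{-}\fd_R M>k$, pick $N\in\FPR(R)$ with $\Tor^R_{k+1}(M,N)\ne 0$ and localize. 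Your direct proof needs none of this.
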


\begin{proof}
We prove the first inequality; the second is analogous.

Set
$k=\sup\{\,\fPD(R_{\mathfrak m})\mid \mathfrak m\in\Max(R)\,\}.$
If $k=\infty$, there is nothing to prove.

For each $\mathfrak m\in\Max(R)$, Theorem~\ref{thm:fPD-characterizations} implies
$\FT\text{-}\fd_{R_{\mathfrak m}}
\bigl(R_{\mathfrak m}/\mathfrak mR_{\mathfrak m}\bigr)\le k.$
Since
\[
(R/\mathfrak m)_{\mathfrak n}=0 \quad (\mathfrak n\neq \mathfrak m),
\qquad
(R/\mathfrak m)_{\mathfrak m}\cong R_{\mathfrak m}/\mathfrak mR_{\mathfrak m},
\]
the one-sided localization inequality for the $\FT$-flat dimension yields
\[
\FT\text{-}\fd_R(R/\mathfrak m)\le k
\qquad\text{for all }\mathfrak m\in\Max(R).
\]
Therefore, by Theorem~\ref{thm:fPD-characterizations},
$\fPD(R)\le k.$
\end{proof}

\section{Triangular matrix rings and the small finitistic dimension}

Throughout this section, all rings are associative with identity, and all modules are unitary.
Let $R$ and $S$ be rings, and let ${}_RM_S$ be an $(R,S)$-bimodule. Consider the
upper triangular matrix ring
\[
T=\begin{pmatrix}R & M\\ 0 & S\end{pmatrix}.
\]
Let
\[
e=\begin{pmatrix}1&0\\0&0\end{pmatrix},\qquad
f=\begin{pmatrix}0&0\\0&1\end{pmatrix}.
\]
Then $eTe\cong R$, $fTf\cong S$, $eTf\cong M$, and $e+f=1$.

A left $T$-module can be described equivalently as a triple $(A,B,\varphi)$, where $A$ is a left
$R$-module, $B$ is a left $S$-module, and
$\varphi\colon M\otimes_S B\longrightarrow A$
is an $R$-module homomorphism. The action of
$\begin{pmatrix}r&m\\0&s\end{pmatrix}\in T$ on $(a,b)\in A\oplus B$
is given by
\[
\begin{pmatrix}r&m\\0&s\end{pmatrix}\cdot(a,b)
=
\bigl(ra+\varphi(m\otimes b),\; sb\bigr).
\]
We write such a module as $N=(A,B,\varphi)$.

\begin{hypothesis}\label{hyp:triangular}
Assume that ${}_RM$ is projective and that $M_S$ is projective.
\end{hypothesis}

\begin{lemma}\label{lem:T-mod-SES}
For every left $T$-module $N=(A,B,\varphi)$, there is a natural short exact sequence
of left $T$-modules
\[
0\longrightarrow (A,0,0)\longrightarrow (A,B,\varphi)\longrightarrow (0,B,0)\longrightarrow 0.
\]
\end{lemma}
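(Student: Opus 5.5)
The plan is to verify directly that the two maps are $T$-linear, using the explicit description of the $T$-action on triples, and then check exactness on the underlying abelian groups. First I will check that $(A,0,0)$ and $(0,B,0)$ are genuine left $T$-modules. For $(A,0,0)$ the structure map is the zero map $M\otimes_S 0=0\to A$. The action reduces to $\begin{pmatrix}r&m\\0&s\end{pmatrix}\cdot(a,0)=(ra,0)$. For $(0,B,0)$ the structure map $M\otimes_S B\to 0$ is zero. The action is $\begin{pmatrix}r&m\\0&s\end{pmatrix}\cdot(0,b)=(0,sb)$. Both satisfy the module axioms because they are restrictions of scalars along the ring maps $T\to R$ and $T\to S$, sending a matrix to its $(1,1)$ and $(2,2)$ entries respectively.

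Next I will define $\iota\colon (A,0,0)\to (A,B,\varphi)$ by $(a,0)\mapsto (a,0)$ and $\pi\colon (A,B,\varphi)\to(0,B,0)$ by $(a,b)\mapsto(0,b)$. I will verify $T$-linearity using the action formula:
\[
\iota\Bigl(\begin{pmatrix}r&m\\0&s\end{pmatrix}(a,0)\Bigr)=(ra,0)=(ra+\varphi(m\otimes 0),\,0)=\begin{pmatrix}r&m\\0&s\end{pmatrix}\iota(a,0),
\]
since $\varphi(m\otimes 0)=0$. Similarly,
\[
\pi\Bigl(\begin{pmatrix}r&m\\0&s\end{pmatrix}(a,b)\Bigr)=\pi\bigl(ra+\varphi(m\otimes b),\,sb\bigr)=(0,sb)=\begin{pmatrix}r&m\\0&s\end{pmatrix}(0,b).
\]
Exactness is then immediate on the underlying groups $A\to A\oplus B\to B$: $\iota$ is injective, $\pi$ is surjective, and $\ker\pi=A\oplus 0=\operatorname{im}\iota$.

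For naturality, a $T$-morphism $(A,B,\varphi)\to(A',B',\varphi')$ corresponds to a pair $(\alpha,\beta)$ compatible with $\varphi,\varphi'$. It induces $(\alpha,0)$ and $(0,\beta)$ on the end terms, and both squares commute componentwise. The only point needing care, and the nearest thing to an obstacle, is confirming that the maps $(\alpha,0)$ and $(0,\beta)$ are themselves $T$-morphisms. This holds because the compatibility condition involves only the zero structure maps. Hypothesis~\ref{hyp:triangular} is not needed here.
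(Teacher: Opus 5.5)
Your proposal is correct and matches the paper's proof: you define $\iota$ and $\pi$ as the inclusion and projection, check $T$-linearity from the action formula (using $\varphi(m\otimes 0)=0$ and the zero structure map on $(0,B,0)$), and read off exactness from $\ker\pi=A\oplus 0=\operatorname{Im}\iota$. Your extra checks are also correct and are left implicit in the paper: that $(A,0,0)$ and $(0,B,0)$ are $T$-modules by restriction along the diagonal-entry ring maps $T\to R$ and $T\to S$, and that the sequence is natural in $N$.
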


\begin{proof}
Define
\[
\iota\colon (A,0,0)\longrightarrow (A,B,\varphi),\qquad \iota(a,0)=(a,0),
\]
and
\[
\pi\colon (A,B,\varphi)\longrightarrow (0,B,0),\qquad \pi(a,b)=(0,b).
\]
We show that both maps are $T$-linear.

Let $\begin{pmatrix}r&m\\0&s\end{pmatrix}\in T$. For $(a,0)\in A\oplus 0$, we have
\[
\begin{pmatrix}r&m\\0&s\end{pmatrix}\cdot(a,0)
=
\bigl(ra+\varphi(m\otimes 0),\,0\bigr)
=
(ra,0),
\]
since $\varphi(m\otimes 0)=0$. Thus $A\oplus 0$ is stable under the $T$-action, and hence
$\iota$ is a well-defined $T$-module homomorphism.

Next, for $(a,b)\in A\oplus B$,
\[
\pi\!\left(
\begin{pmatrix}r&m\\0&s\end{pmatrix}\cdot(a,b)
\right)
=
\pi\bigl(ra+\varphi(m\otimes b),\,sb\bigr)
=
(0,sb).
\]
On the other hand,
\[
\begin{pmatrix}r&m\\0&s\end{pmatrix}\cdot\pi(a,b)
=
\begin{pmatrix}r&m\\0&s\end{pmatrix}\cdot(0,b)
=
(0,sb),
\]
since the structure map of $(0,B,0)$ is zero. Therefore $\pi$ is $T$-linear.

It is clear that $\iota$ is injective and $\pi$ is surjective. Moreover,
\[
\ker(\pi)=\{(a,b)\in A\oplus B\mid b=0\}=A\oplus 0=\operatorname{Im}(\iota).
\]
Hence the sequence is exact.
\end{proof}

\begin{lemma}\label{lem:corner-FPR}
Assume Hypothesis~\ref{hyp:triangular}. Let $P$ be a finitely generated projective left
$T$-module. Then $eP$ is a finitely generated projective left $R$-module and $fP$ is a
finitely generated projective left $S$-module. Consequently, if $N\in \FPR(T)$, then
$eN\in \FPR(R)$ and $fN\in \FPR(S)$.
\end{lemma}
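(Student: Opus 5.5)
The plan is to reduce everything to the case $P=T$ and then transport finite projective resolutions through the exact functors $N\mapsto eN$ and $N\mapsto fN$. As a left $T$-module, $T=Te\oplus Tf$. The corner decompositions are
\[
eTe\cong R,\qquad eTf\cong M,\qquad fTe=0,\qquad fTf\cong S .
\]

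First compute $eT$ and $fT$. We have $eT=eTe\oplus eTf\cong R\oplus M$ as a left $R$-module, and $fT=fTf\cong S$ as a left $S$-module. Under Hypothesis~\ref{hyp:triangular}, ${}_RM$ is projective, so $eT$ is a projective left $R$-module. The module $fT\cong S$ is free of rank one.

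For finite generation of $eT$ we need ${}_RM$ to be finitely generated. This is the main obstacle, since the hypothesis as stated only gives projectivity. I would first check whether finite generation is implicit in the intended setting. Failing that, I would note that $eTe\cong R$ and that $Te$ is the projective $T$-module corresponding to $(R,0,0)$, so $eTe=R$ is finitely generated. The problematic summand is $eTf\cong M$, which comes from $Tf=(M,S,\mathrm{id})$. Hence the claim for $eP$ genuinely needs ${}_RM$ finitely generated, and I would add that assumption, or read it into the hypothesis, explicitly.

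Next, pass to a general finitely generated projective $P$. Write $P\oplus Q\cong T^n$. The functor $N\mapsto eN$ is additive, so $eP$ is a direct summand of $(eT)^n$, and similarly $fP$ is a direct summand of $(fT)^n\cong S^n$. Direct summands of finitely generated projective modules are finitely generated projective, which gives the first assertion.

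For the consequence, take a finite resolution $0\to P_k\to\cdots\to P_0\to N\to 0$ by finitely generated projective $T$-modules. The functors $N\mapsto eN\cong eT\otimes_T N$ and $N\mapsto fN$ are exact, since $eT$ and $fT$ are projective right $T$-modules; equivalently, multiplication by an idempotent preserves exactness of abelian groups. Applying them yields exact sequences
\[
0\to eP_k\to\cdots\to eP_0\to eN\to 0,\qquad 0\to fP_k\to\cdots\to fP_0\to fN\to 0,
\]
whose terms are finitely generated projective by the first part. Hence $eN\in\FPR(R)$ and $fN\in\FPR(S)$.
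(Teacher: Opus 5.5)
Your argument is correct once ${}_RM$ is assumed finitely generated, and you are right that this assumption cannot be dropped: the statement as written is false, and the paper's proof only appears to avoid the issue. The paper gets projectivity of $eP$ and $fP$ as you do, from $eT\cong R\oplus M$ and $fT\cong S$. It then deduces finite generation from the identity $e(Tp_1+\cdots+Tp_n)=Rep_1+\cdots+Rep_n$.

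That identity is wrong. Since $eT=eTe\oplus eTf$, one has $e(Tp_i)=(eTe)p_i+(eTf)p_i=R\,ep_i+M\,fp_i$, and the second summand is exactly where ${}_RM$ enters. The corresponding computation for $f$ is fine, because $fTe=0$.

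Here is a concrete counterexample. Let $R=k$ be a field, $S=k[x]$, and $M=k[x]$ with the obvious actions. Then ${}_kM$ is free of infinite rank and $M_{k[x]}$ is free of rank one, so Hypothesis~\ref{hyp:triangular} holds. Yet for $P=N=T$ one gets $eT\cong k\oplus k[x]$, which is not finitely generated over $k$ and hence not in $\FPR(k)$. Since $M$ is a quotient of $eT$, your diagnosis is sharp: both assertions of the lemma hold if and only if ${}_RM$ is finitely generated.

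With that hypothesis added, your route is slightly cleaner than the paper's. Writing $P\oplus Q\cong T^n$ makes $eP$ and $fP$ direct summands of $(R\oplus M)^n$ and $S^n$, so projectivity and finite generation come out together. The exactness argument for the consequence is the same as in the paper.

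The extra assumption holds in the concrete cases the paper treats ($M=R$ and $M=Ue$). However, the proofs of the upper bound in Theorem~\ref{thm:fPD-triangular} and of the corollaries derived from it use $eN\in\FPR(R)$, so they need it as well.
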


\begin{proof}
Since $e$ and $f$ are idempotents, the functors
\[
e(-)\cong eT\otimes_T -
\qquad\text{and}\qquad
f(-)\cong fT\otimes_T -
\]
are exact on left $T$-modules.

Let $P$ be a projective left $T$-module. Write $P$ as a direct summand of a free
$T$-module $T^{(I)}$. Applying the exact functor $e(-)$, we see that $eP$ is a direct
summand of
$e\bigl(T^{(I)}\bigr)\cong (eT)^{(I)}.$
Now, as a left $R=eTe$-module,
\[
eT=eTe\oplus eTf\cong R\oplus M.
\]
Since ${}_RM$ is projective by hypothesis, it follows that $eT$ is a projective left
$R$-module. Hence $(eT)^{(I)}$ is projective over $R$, and therefore $eP$ is projective
over $R$.

Similarly,
\[
fT=fTe\oplus fTf=0\oplus fTf\cong S
\]
as a left $S=fTf$-module. Thus $fT$ is a free left $S$-module of rank one, so
$fP$ is projective over $S$.

To prove finite generation, let
$P=Tp_1+\cdots+Tp_n.$
Then
\[
eP=e(Tp_1+\cdots+Tp_n)=Re p_1+\cdots+Re p_n,
\]
so $eP$ is finitely generated as a left $R$-module. Likewise,
$fP=Sf p_1+\cdots+Sf p_n,$
so $fP$ is finitely generated as a left $S$-module.

Now let $N\in \FPR(T)$. Then $N$ admits a finite projective resolution
\[
0\longrightarrow P_n\longrightarrow \cdots \longrightarrow P_1\longrightarrow P_0
\longrightarrow N\longrightarrow 0
\]
in which each $P_i$ is a finitely generated projective left $T$-module. Applying the
exact functor $e(-)$, we obtain an exact sequence
\[
0\longrightarrow eP_n\longrightarrow \cdots \longrightarrow eP_1\longrightarrow eP_0
\longrightarrow eN\longrightarrow 0,
\]
and each $eP_i$ is finitely generated projective over $R$ by the first part. Thus
$eN\in \FPR(R)$.

Similarly, applying $f(-)$ yields an exact sequence
\[
0\longrightarrow fP_n\longrightarrow \cdots \longrightarrow fP_1\longrightarrow fP_0
\longrightarrow fN\longrightarrow 0,
\]
where each $fP_i$ is finitely generated projective over $S$. Hence
$fN\in \FPR(S)$.
\end{proof}

\begin{lemma}\label{lem:pd-lower-R}
Assume Hypothesis~\ref{hyp:triangular}. Then
$\fPD(T)\ge \fPD(R).$
\end{lemma}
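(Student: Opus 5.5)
The plan is to transport modules in $\FPR(R)$ to $\FPR(T)$ along the extension-by-zero functor. For a left $R$-module $X$, regard it as the left $T$-module $(X,0,0)$. Equivalently, $(X,0,0)\cong Te\otimes_R X$, because $Te=\begin{pmatrix}R\\0\end{pmatrix}\cong eTe=R$ as a right $R$-module. In particular, $Te$ is flat (indeed free of rank one) over $R$, so $Te\otimes_R-$ is exact. This functor sends $R$ to $Te$, which is a finitely generated projective left $T$-module, being a direct summand of $T=Te\oplus Tf$. Hence it sends finitely generated projective $R$-modules to finitely generated projective $T$-modules.

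Let $X\in\FPR(R)$ with $\pd_R X=n$, and take a finite resolution $0\to P_n\to\cdots\to P_0\to X\to 0$ by finitely generated projective $R$-modules. Applying the exact functor $Te\otimes_R-$ gives a resolution of $(X,0,0)$ by finitely generated projective $T$-modules $Te\otimes_R P_i$. Therefore $(X,0,0)\in\FPR(T)$ and $\pd_T(X,0,0)\le n$.

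For the reverse inequality $\pd_T(X,0,0)\ge n$, restrict along $e(-)$. Suppose $\pd_T(X,0,0)=d$. Then there is a resolution of $(X,0,0)$ of length $d$ by projective $T$-modules, which may be chosen finitely generated since the module lies in $\FPR(T)$ (Lemma~\ref{lem:FPR-basic}, via Schanuel). Applying the exact functor $e(-)$ and using Lemma~\ref{lem:corner-FPR}, which requires ${}_RM$ projective, we obtain a resolution of $e(X,0,0)=X$ of length $d$ by projective $R$-modules. Hence $n=\pd_R X\le d$, and so $\pd_T(X,0,0)=\pd_R X$.

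Taking the supremum over $X\in\FPR(R)$ gives $\fPD(T)\ge\fPD(R)$. The only delicate point is checking that $Te\otimes_R X$ really is $(X,0,0)$ and that the functor is exact. Both follow from the identification $Te\cong R$ as right $R$-modules. Alternatively, one can apply the functor termwise: $(P,0,0)$ is a direct summand of $(Te)^k$ for $P$ a summand of $R^k$, and exactness of resolutions is checked componentwise using Lemma~\ref{lem:T-mod-SES}-style descriptions.
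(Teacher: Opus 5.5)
Your proposal is correct and follows essentially the same route as the paper. You use exactness of $Te\otimes_R-$ (from $Te\cong R$ as a right $R$-module) to get $(X,0,0)\in\FPR(T)$ with $\pd_T(X,0,0)\le\pd_R X$, and then apply $e(-)$ together with Lemma~\ref{lem:corner-FPR} (where ${}_RM$ projective is used) for the reverse inequality. Your detour to choose the $T$-resolution finitely generated is harmless but unnecessary, because $e(-)$ already sends arbitrary projective $T$-modules to projective $R$-modules.
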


\begin{proof}
Let $X\in \FPR(R)$. Since $Te$ is a projective left $T$-module and a free right
$R$-module of rank one, the functor
$Te\otimes_R - \colon R\text{-Mod}\longrightarrow T\text{-Mod}$
is exact and sends finitely generated projective $R$-modules to finitely generated
projective $T$-modules. Moreover,
$Te\otimes_R X \cong (X,0,0).$
Hence $(X,0,0)\in \FPR(T)$ and
$\pd_T(X,0,0)\le \pd_R(X).$

Conversely, let
\[
0\longrightarrow P_n\longrightarrow \cdots \longrightarrow P_1\longrightarrow P_0
\longrightarrow (X,0,0)\longrightarrow 0
\]
be a finite projective resolution of $(X,0,0)$ over $T$. Applying the exact functor
$e(-)$, we obtain an exact sequence
\[
0\longrightarrow eP_n\longrightarrow \cdots \longrightarrow eP_1\longrightarrow eP_0
\longrightarrow X\longrightarrow 0
\]
of $R$-modules. By Lemma~\ref{lem:corner-FPR}, each $eP_i$ is a finitely generated
projective $R$-module. Therefore
$\pd_R(X)\le \pd_T(X,0,0).$
Thus
$\pd_T(X,0,0)=\pd_R(X).$

Taking the supremum over all $X\in \FPR(R)$ yields
$\fPD(T)\ge \fPD(R).$
\end{proof}

\begin{lemma}\label{lem:pd-lower-S}
Assume Hypothesis~\ref{hyp:triangular}. Then
$\fPD(T)\ge \fPD(S).$
\end{lemma}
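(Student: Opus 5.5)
The plan is to mirror the proof of Lemma~\ref{lem:pd-lower-R}, using the idempotent $f$ in place of $e$. For an $S$-module $Y$ we build a $T$-module with the same projective dimension, via the functor
\[
Tf\otimes_S - \colon S\text{-Mod}\longrightarrow T\text{-Mod}.
\]
The naive candidate $(0,Y,0)$ is not suitable, since its $T$-projective resolutions need not restrict well. So we use instead
\[
Tf\otimes_S Y\cong (M\otimes_S Y,\;Y,\;\mathrm{id}).
\]

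The first step is to check that $Tf\otimes_S -$ is exact and sends finitely generated projective $S$-modules to finitely generated projective $T$-modules.
\begin{enumerate}
\item[(i)] As a left $T$-module, $Tf$ is a direct summand of $T=Te\oplus Tf$. Hence it is finitely generated projective, and so is $Tf\otimes_S S^n\cong (Tf)^n$ together with its direct summands.
\item[(ii)] As a right $S=fTf$-module, $Tf=eTf\oplus fTf\cong M\oplus S$. This is projective because $M_S$ is projective by Hypothesis~\ref{hyp:triangular}.
\end{enumerate}
By (ii), $Tf$ is flat as a right $S$-module, so the functor is exact. Here, and only here, the right-hand half of Hypothesis~\ref{hyp:triangular} is used. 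Confirming this exactness is the point I expect to need the most care; the remaining steps are formal.

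Next, let $Y\in\FPR(S)$ and take a finite resolution of $Y$ by finitely generated projective $S$-modules. Applying $Tf\otimes_S -$ gives a finite resolution of $Tf\otimes_S Y$ by finitely generated projective $T$-modules. Therefore $Tf\otimes_S Y\in\FPR(T)$ and
\[
\pd_T(Tf\otimes_S Y)\le \pd_S Y.
\]

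For the reverse inequality, take a finite resolution $0\to P_n\to\cdots\to P_0\to Tf\otimes_S Y\to 0$ by finitely generated projective $T$-modules. Apply the exact functor $f(-)\cong fT\otimes_T -$. Since
\[
f(Tf\otimes_S Y)\cong fTf\otimes_S Y\cong Y,
\]
this yields an exact sequence $0\to fP_n\to\cdots\to fP_0\to Y\to 0$. By Lemma~\ref{lem:corner-FPR}, each $fP_i$ is a finitely generated projective $S$-module, so $\pd_S Y\le \pd_T(Tf\otimes_S Y)$.

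Combining the two inequalities gives $\pd_T(Tf\otimes_S Y)=\pd_S Y$ for every $Y\in\FPR(S)$. Taking the supremum over all $Y\in\FPR(S)$ then yields $\fPD(T)\ge\fPD(S)$.
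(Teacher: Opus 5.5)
Your proof is correct and follows the paper's argument essentially step for step. You use the exact functor $Tf\otimes_S-$ (exact because $Tf_S\cong M\oplus S$ is projective) to get $Tf\otimes_S Y\in\FPR(T)$ with $\pd_T(Tf\otimes_S Y)\le\pd_S Y$, and you apply $f(-)$ together with Lemma~\ref{lem:corner-FPR} for the reverse inequality, exactly as the paper does.

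One correction to your motivating remark: $(0,Y,0)$ restricts perfectly well under $f(-)$, so $\pd_T(0,Y,0)\ge\pd_S Y$ always holds. The real reason to avoid it is that it need not lie in $\FPR(T)$; for instance, $(0,S,0)$ is finitely presented over $T$ only when ${}_RM$ is finitely generated.
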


\begin{proof}
Let $Y\in \FPR(S)$. Since $Tf$ is a finitely generated projective left $T$-module and,
as a right $S$-module,
$Tf\cong M\oplus S,$
the module $Tf_S$ is projective by Hypothesis~\ref{hyp:triangular}. Hence the functor
$Tf\otimes_S - \colon S\text{-Mod}\longrightarrow T\text{-Mod}$
is exact and sends finitely generated projective $S$-modules to finitely generated
projective $T$-modules. Moreover,
$Tf\otimes_S Y \cong (M\otimes_S Y,\; Y,\; \mathrm{id}_{M\otimes_S Y}).$
Hence this $T$-module belongs to $\FPR(T)$ and
$\pd_T(Tf\otimes_S Y)\le \pd_S(Y).$

Conversely, let
\[
0\longrightarrow P_n\longrightarrow \cdots \longrightarrow P_1\longrightarrow P_0
\longrightarrow Tf\otimes_S Y\longrightarrow 0
\]
be a finite projective resolution over $T$. Applying the exact functor $f(-)$, we obtain
an exact sequence
\[
0\longrightarrow fP_n\longrightarrow \cdots \longrightarrow fP_1\longrightarrow fP_0
\longrightarrow Y\longrightarrow 0,
\]
and by Lemma~\ref{lem:corner-FPR}, each $fP_i$ is a finitely generated projective
left $S$-module. Therefore
$\pd_S(Y)\le \pd_T(Tf\otimes_S Y).$
Thus
$\pd_T(Tf\otimes_S Y)=\pd_S(Y).$

Taking the supremum over all $Y\in \FPR(S)$ yields
$\fPD(T)\ge \fPD(S).$
\end{proof}

\begin{theorem}\label{thm:fPD-triangular}
Assume Hypothesis~\ref{hyp:triangular}. Then
\[
\max\{\fPD(R),\,\fPD(S)\}\le \fPD(T)\le \max\{\fPD(R)+1,\,\fPD(S)+1\}.
\]
In particular, if $\fPD(R)$ and $\fPD(S)$ are finite, then $\fPD(T)$ is finite.
\end{theorem}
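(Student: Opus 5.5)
The lower bound $\max\{\fPD(R),\fPD(S)\}\le \fPD(T)$ is immediate from Lemma~\ref{lem:pd-lower-R} and Lemma~\ref{lem:pd-lower-S}. So the work lies in the upper bound. Set $r=\fPD(R)$, $s=\fPD(S)$ and $k=\max\{r,s\}$; we may assume $k<\infty$. We must show $\pd_T N\le k+1$ for every $N=(A,B,\varphi)\in \FPR(T)$.

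By Lemma~\ref{lem:corner-FPR}, $A=eN\in\FPR(R)$ and $B=fN\in\FPR(S)$. Hence $\pd_R A\le r$ and $\pd_S B\le s$. We then use the short exact sequence of Lemma~\ref{lem:T-mod-SES},
\[
0\longrightarrow (A,0,0)\longrightarrow N\longrightarrow (0,B,0)\longrightarrow 0,
\]
which gives $\pd_T N\le \max\{\pd_T(A,0,0),\pd_T(0,B,0)\}$. So it suffices to bound the two outer terms.

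\textbf{The term $(A,0,0)$.} As in the proof of Lemma~\ref{lem:pd-lower-R}, the functor $Te\otimes_R-$ is exact, sends projective $R$-modules to projective $T$-modules, and satisfies $Te\otimes_R A\cong (A,0,0)$. Applying it to a projective resolution of $A$ of length $\pd_R A$ gives $\pd_T(A,0,0)\le \pd_R A\le r$.

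\textbf{The term $(0,B,0)$.} Consider the exact sequence of $T$-modules
\[
0\longrightarrow (M\otimes_S B,0,0)\longrightarrow Tf\otimes_S B\longrightarrow (0,B,0)\longrightarrow 0,
\]
where $Tf\otimes_S B\cong (M\otimes_S B,B,\mathrm{id})$. This is again an instance of Lemma~\ref{lem:T-mod-SES}.
\begin{enumerate}
\item[(i)] Since $Tf_S$ is projective, $Tf\otimes_S-$ is exact and preserves projectives. Hence $\pd_T(Tf\otimes_S B)\le \pd_S B\le s$.
\item[(ii)] Since $M_S$ is projective, $M\otimes_S-$ is exact. Since ${}_RM$ is projective, it sends projective $S$-modules to projective left $R$-modules: $M\otimes_S S^{(I)}\cong M^{(I)}$, and direct summands are preserved. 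Tensoring a projective resolution of $B$ with $M$ therefore gives $\pd_R(M\otimes_S B)\le s$. By the previous step, $\pd_T(M\otimes_S B,0,0)\le s$.
\end{enumerate}
The cokernel estimate in the sequence above then yields $\pd_T(0,B,0)\le 1+\max\{s,s\}=s+1$.

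Combining the two bounds, $\pd_T N\le \max\{r,s+1\}\le \max\{r+1,s+1\}$. Taking the supremum over $N\in\FPR(T)$ gives the upper bound, and finiteness of $\fPD(T)$ follows at once.

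The main point to check carefully is the projective-dimension bookkeeping for the two $T$-modules $(A,0,0)$ and $(0,B,0)$. These need not lie in $\FPR(T)$, but this does not matter: we only use ordinary projective dimension over $T$, and the bounds $\pd_R A\le r$, $\pd_S B\le s$ come from $A,B$ lying in $\FPR$ of the corner rings. The key step is (ii), where both halves of Hypothesis~\ref{hyp:triangular} are genuinely needed. Exactness of $M\otimes_S-$ uses $M_S$ projective, and projectivity of the resulting $R$-modules uses ${}_RM$ projective.
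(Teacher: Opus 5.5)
Your argument is the paper's own proof. It uses the same sequence from Lemma~\ref{lem:T-mod-SES} and the same two end-term estimates, and your derivation of $\pd_T(M\otimes_S B,0,0)\le s$ from exactness of $M\otimes_S-$ supplies a step the paper only asserts. The gap, which you share with the paper, is the claim $A=eN\in\FPR(R)$ taken from Lemma~\ref{lem:corner-FPR}. As a left $R$-module $eT\cong R\oplus M$, and for the cyclic projective $P=Tf$ one gets $eP=eTf\cong {}_RM$. The lemma's proof writes $eP=Rep_1+\cdots+Rep_n$, but in fact $eTp_i=Rep_i+Mfp_i$. So the lemma is valid only when ${}_RM$ is finitely generated. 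The bound $\pd_R A\le\fPD(R)$ itself can also break under Hypothesis~\ref{hyp:triangular}. Take $R=k[x,y]_{(x,y)}/(x^2,xy)$, a Noetherian local ring of depth $0$, so $\fPD(R)=0$. Take $S=M=R[t]$ and $N=Tf\otimes_S S/(yt-1)S$. This $N$ lies in $\FPR(T)$ via $0\to Tf\to Tf\to N\to 0$, because $yt-1$ is a nonzerodivisor and $Tf_S$ is free. Then $eN\cong R[t]/(yt-1)\cong R_y\cong k(y)$ is a countably generated flat $R$-module annihilated by $x\neq 0$, so $\pd_R(eN)=1>\fPD(R)$. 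Your proof, including the sharper intermediate bound $\max\{r,s+1\}$, is therefore justified only when ${}_RM$ is finitely generated. That case does cover the paper's applications to $UT_2(R)$ and $UT_n(R)$.

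The stated upper bound can still be proved if you look at $eN$ only after one syzygy. Let $0\to N'\to P_0\to N\to 0$ with $P_0$ finitely generated projective, and write $N'=(A',B',\varphi')$. Structure maps of projective $T$-modules are injective and $M_S$ is flat, so $\varphi'$ is injective. Hence the counit $(\varphi',\mathrm{1}_{B'})\colon Tf\otimes_S B'=(M\otimes_S B',B',\mathrm{1})\to N'$ gives an exact sequence $0\to Tf\otimes_S B'\to N'\to(\operatorname{coker}\varphi',0,0)\to 0$. Since $B'\in\FPR(S)$ with $\pd_S B'\le\max\{s-1,0\}$, the left term lies in $\FPR(T)$. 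Hence so does $(\operatorname{coker}\varphi',0,0)$, by the two-out-of-three argument of Lemma~\ref{lem:FPR-basic}(1)(c), which works over any ring. To descend to $R$, use $\overline{T}=T/TfT\cong R$ instead of $e(-)$. As a right $T$-module, $TfT\cong Tf\otimes_S fT$ is projective because $M_S$ is. Also $TfT\otimes_T(X,0,0)\cong Tf\otimes_S f(X,0,0)=0$, so $\Tor^T_i(\overline{T},(X,0,0))=0$ for $i\ge 1$. Since $\overline{T}\otimes_T Te\cong R$ and $\overline{T}\otimes_T Tf=0$, a finite resolution of $(X,0,0)$ by finitely generated projective $T$-modules becomes one of $X$ over $R$. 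Hence $\operatorname{coker}\varphi'\in\FPR(R)$, so $\pd_T N'\le\max\{s-1,r,0\}$ and $\pd_T N\le\max\{r+1,s\}\le\max\{r+1,s+1\}$.
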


\begin{proof}
The lower bound follows immediately from Lemmas~\ref{lem:pd-lower-R}
and~\ref{lem:pd-lower-S}.

Let $N\in \FPR(T)$ and write
$N=(A,B,\varphi),$
so that $A=eN$ and $B=fN$. By Lemma~\ref{lem:corner-FPR},
$A\in \FPR(R)$ and $B\in \FPR(S).$

By Lemma~\ref{lem:T-mod-SES}, there is a short exact sequence
\[
0\longrightarrow (A,0,0)\longrightarrow (A,B,\varphi)\longrightarrow (0,B,0)\longrightarrow 0.
\]
Hence
\[
\pd_T(N)\le \max\bigl\{\pd_T(A,0,0)+1,\ \pd_T(0,B,0)\bigr\}.
\]

Since $Te$ is a projective left $T$-module and a free right $R$-module of rank one,
\[
\pd_T(A,0,0)=\pd_R(A)\le \fPD(R).
\]
Similarly, since $Tf$ is a projective left $T$-module and a projective right $S$-module,
\[
\pd_T(0,B,0)\le \pd_S(B)+1\le \fPD(S)+1.
\]
Indeed, applying $Tf\otimes_S-$ to a finite projective resolution of $B$ over $S$ yields
a finite projective resolution of $Tf\otimes_S B\cong (M\otimes_S B,B,\mathrm{id})$ of
length $\pd_S(B)$, while the natural short exact sequence
\[
0\longrightarrow (M\otimes_S B,0,0)\longrightarrow (M\otimes_S B,B,\mathrm{id})
\longrightarrow (0,B,0)\longrightarrow 0
\]
shows that
$\pd_T(0,B,0)\le \pd_S(B)+1.$

Therefore
\[
\pd_T(N)\le \max\bigl\{\fPD(R)+1,\ \fPD(S)+1\bigr\}.
\]
Taking the supremum over all $N\in \FPR(T)$ yields
\[
\fPD(T)\le \max\bigl\{\fPD(R)+1,\ \fPD(S)+1\bigr\}.
\]
The final assertion is immediate.
\end{proof}

\begin{corollary}\label{cor:fPD-triangular-hereditary}
Assume Hypothesis~\ref{hyp:triangular}. If $\fPD(R)=0$ and $\fPD(S)=0$, then
\[
\fPD(T)\le 1.
\]
\end{corollary}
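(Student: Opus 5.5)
The corollary follows by specializing Theorem~\ref{thm:fPD-triangular}. Under Hypothesis~\ref{hyp:triangular}, that theorem gives
\[
\max\{\fPD(R),\,\fPD(S)\}\le \fPD(T)\le \max\{\fPD(R)+1,\,\fPD(S)+1\}.
\]
Substituting $\fPD(R)=0$ and $\fPD(S)=0$ into the upper bound yields $\fPD(T)\le\max\{1,1\}=1$. The lower bound becomes $0\le\fPD(T)$, which is automatic.

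It is worth recording how the upper bound is produced in this case, since that is where the hypotheses enter. Take $N=(A,B,\varphi)\in\FPR(T)$. By Lemma~\ref{lem:corner-FPR}, $A\in\FPR(R)$ and $B\in\FPR(S)$, so both are finitely generated projective, because $\fPD(R)=\fPD(S)=0$.

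\emph{The $(A,0,0)$ term.} We have $(A,0,0)\cong Te\otimes_R A$, which is a finitely generated projective $T$-module.

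\emph{The $(0,B,0)$ term.} There is a sequence $0\to(M\otimes_S B,0,0)\to Tf\otimes_S B\to(0,B,0)\to 0$. Its middle term is projective over $T$. Its left term is $Te\otimes_R(M\otimes_S B)$, and $M\otimes_S B$ is a projective $R$-module because ${}_RM$ is projective and $B$ is a summand of $S^n$. Hence $\pd_T(0,B,0)\le 1$.

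\emph{Conclusion.} Lemma~\ref{lem:T-mod-SES} supplies $0\to(A,0,0)\to N\to(0,B,0)\to 0$. With $\pd_T(A,0,0)=0$ and $\pd_T(0,B,0)\le 1$, this gives $\pd_T N\le 1$. Taking the supremum over $N\in\FPR(T)$ gives $\fPD(T)\le 1$.

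The only point needing care is the estimate $\pd_T(0,B,0)\le 1$, and it rests on Hypothesis~\ref{hyp:triangular}, specifically the projectivity of ${}_RM$. Everything else is a direct substitution into Theorem~\ref{thm:fPD-triangular}.
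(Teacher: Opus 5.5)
Your reduction is the paper's own proof: the corollary is stated there as an immediate specialization of Theorem~\ref{thm:fPD-triangular}, and your unpacking retraces that theorem's proof with $\fPD(R)=\fPD(S)=0$. The unpacking, however, has a genuine gap. The same gap is in the proof of Theorem~\ref{thm:fPD-triangular}. It is the step ``by Lemma~\ref{lem:corner-FPR}, $A\in\FPR(R)$, so $A$ is finitely generated projective and $(A,0,0)$ is a finitely generated projective $T$-module.'' Hypothesis~\ref{hyp:triangular} does not give this. As a left $R$-module, $eT=eTe\oplus eTf\cong R\oplus M$. So for $N=Tf\in\FPR(T)$ one gets $A=eTf=M$, which is not finitely generated unless ${}_RM$ is. For instance, take $k$ a field, $R=k$ and $S=M=k(t)$; here $\fPD(R)=\fPD(S)=0$. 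The proof of Lemma~\ref{lem:corner-FPR} drops the terms $eTf\,p_i$ when it writes $e(Tp_1+\cdots+Tp_n)=Rep_1+\cdots+Rep_n$. So the delicate point is the $A$-term, not $\pd_T(0,B,0)\le 1$. Your treatment of $B=fN$ is fine, since $fT\cong S$ does give $fN\in\FPR(S)$.

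The corollary itself survives. One way to prove it without any finiteness of ${}_RM$ is to pass to a syzygy.
\begin{enumerate}
\item[(a)] Let $0\to\Omega\to P_0\to N\to 0$ come from a finite resolution of $N$ by finitely generated projectives, so $\Omega\in\FPR(T)$.
\item[(b)] Since $fN$ is projective, $f\Omega$ is a direct summand of $fP_0$. The map $\varphi_{P_0}$ is injective, being a summand of $\varphi_{T^m}$, the inclusion $M^m\hookrightarrow(R\oplus M)^m$. Up to $e\Omega\subseteq eP_0$, $\varphi_\Omega$ is $\varphi_{P_0}$ composed with the split monomorphism $M\otimes_S f\Omega\to M\otimes_S fP_0$. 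Hence $\varphi_\Omega\colon M\otimes_S f\Omega\to e\Omega$ is injective.
\item[(c)] Put $G=T/TfT\otimes_T-$. Here $T/TfT\cong R$, and $G$ sends finitely generated projectives to finitely generated projectives.
\item[(d)] As a right $T$-module, $TfT=eTf\oplus fT$, and $eTf\cong M\otimes_S fT$ is projective because $M_S$ is. Hence $\Tor^T_i(T/TfT,-)=0$ for $i\ge 2$, while $\Tor^T_1(T/TfT,\Omega)=\ker(\varphi_\Omega)=0$.
\item[(e)] Applying $G$ to the resolution of $\Omega$ therefore shows $G(\Omega)=\operatorname{coker}(\varphi_\Omega)\in\FPR(R)$. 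This module is projective because $\fPD(R)=0$.
\item[(f)] Finally, $0\to Tf\otimes_S f\Omega\to\Omega\to(\operatorname{coker}(\varphi_\Omega),0,0)\to 0$ is exact with both outer terms projective over $T$. So $\Omega$ is projective and $\pd_T N\le 1$.
\end{enumerate}
Incidentally, this argument uses only the projectivity of $M_S$.
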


\begin{proof}
This follows immediately from Theorem~\ref{thm:fPD-triangular}.
\end{proof}

The following example shows that, in general, the extra $+1$ in the upper bound of
Theorem~\ref{thm:fPD-triangular} cannot be removed. In particular, even when
\[
\fPD(R)=\fPD(S)=0
\qquad\text{and}\qquad
\pd_R(M)=0,
\]
the triangular matrix ring
\[
T=\begin{pmatrix}R&M\\0&S\end{pmatrix}
\]
may still satisfy
$\fPD(T)=1.$

\begin{example}\rm
Let $k$ be a field, and set
\[
R:=k[\varepsilon]/(\varepsilon^2),
\qquad
S:=k,
\qquad
M:=R.
\]
View $M$ as an $(R,S)$-bimodule in the natural way: the left $R$-action is given by
multiplication in $R$, and the right $S=k$-action is induced by the $k$-algebra structure
of $R$. Form the upper triangular matrix ring
\[
T:=\begin{pmatrix}R&M\\0&S\end{pmatrix}.
\]

\smallskip
\noindent
We show that $\fPD(T)=1$.

\smallskip
\noindent
Step 1:
Since $M=R$ as a left $R$-module, $M$ is free of rank one, and hence
$\pd_R(M)=0.$
Also, $S=k$ is a field, so
$\fPD(S)=0.$
Moreover, $R=k[\varepsilon]/(\varepsilon^2)$ is a finite-dimensional local Frobenius algebra,
hence self-injective. Therefore every finitely generated $R$-module of finite projective
dimension is projective, and so
$\fPD(R)=0.$
By Theorem~\ref{thm:fPD-triangular}, we obtain
\[
\fPD(T)\le
\max\{\fPD(R)+1,\fPD(S)+1\}
=
\max\{0+1,0+1\}
=1.
\]

\smallskip
\noindent
Step 2: 
Let
\[
e_1=\begin{pmatrix}1&0\\0&0\end{pmatrix},
\qquad
e_2=\begin{pmatrix}0&0\\0&1\end{pmatrix}.
\]
Then the indecomposable projective left $T$-modules are
\[
P_1:=Te_1\cong (R,0,0),
\qquad
P_2:=Te_2\cong (M,S,\mu),
\]
where
$\mu\colon M\otimes_S S\longrightarrow M$
is the canonical isomorphism.

Now consider the left $T$-module
$L:=(0,S,0).$
There is a natural surjective homomorphism
\[
\pi\colon P_2=(M,S,\mu)\longrightarrow L=(0,S,0),
\qquad
\pi(x,s)=(0,s).
\]
Its kernel is precisely $(M,0,0)\cong (R,0,0)=P_1$. Thus we have a short exact sequence
$0\longrightarrow P_1\longrightarrow P_2\longrightarrow L\longrightarrow 0.$
Hence
$\pd_T(L)\le 1.$

\smallskip
\noindent
Step 3:
Assume, to the contrary, that $L$ is projective. Then the short exact sequence
$0\longrightarrow P_1\longrightarrow P_2\overset{\pi}{\longrightarrow} L\longrightarrow 0$
splits. Hence there exists a $T$-homomorphism
$\sigma\colon L=(0,S,0)\longrightarrow P_2=(M,S,\mu)$
such that
$\pi\circ \sigma=\id_L.$

Write
$\sigma=(u,v),$
where
$u\colon 0\longrightarrow M,$ $v\colon S\longrightarrow S.$
Since $\pi\circ \sigma=\id_L$, necessarily
$v=\id_S.$
Moreover, any morphism of triples
$(u,v)\colon (X,Y,\phi)\longrightarrow (X',Y',\phi')$
must satisfy
$u\circ \phi=\phi'\circ (1_M\otimes v).$
Applying this to
$\sigma\colon (0,S,0)\longrightarrow (M,S,\mu),$
we obtain
$u\circ 0=\mu\circ (1_M\otimes \id_S),$
that is,
$0=\mu.$
This is impossible, since
$\mu\colon M\otimes_S S\longrightarrow M$
is an isomorphism. Therefore no such section $\sigma$ exists, and the above short exact
sequence does not split. Consequently, $L$ is not projective.

Hence
$\pd_T(L)=1.$

Since $L\in \FPR(T)$ and $\pd_T(L)=1$, we get
$\fPD(T)\ge 1.$
Combining this with the upper bound from Step~1, we conclude that
$\fPD(T)=1.$
\end{example}

\begin{corollary}\label{cor:UT2}
Let $R$ be a ring, and let
\[
UT_2(R)=\begin{pmatrix}R & R\\ 0 & R\end{pmatrix}.
\]
Then
$\fPD\bigl(UT_2(R)\bigr)\le \fPD(R)+1.$
\end{corollary}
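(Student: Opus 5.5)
The plan is to obtain Corollary~\ref{cor:UT2} as a direct specialization of Theorem~\ref{thm:fPD-triangular}. I would take $S=R$ and $M={}_RR_R$, the regular bimodule, so that
\[
T=\begin{pmatrix}R&M\\0&S\end{pmatrix}=UT_2(R).
\]
The only thing to check is Hypothesis~\ref{hyp:triangular}. Here ${}_RM={}_RR$ is free of rank one as a left $R$-module, hence projective. Likewise $M_S=R_R$ is free of rank one as a right $R$-module, hence projective. So the hypothesis holds with no extra assumptions on $R$; in particular, $R$ need not be commutative, which matches the standing convention of this section.

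Next I would confirm that the identification of $UT_2(R)$ with the abstract triangular ring $T$ is compatible with the module description used in the theorem. Left $UT_2(R)$-modules should correspond to triples $(A,B,\varphi)$ with $\varphi\colon R\otimes_R B\cong B\to A$. The corner rings should satisfy $eTe\cong R$ and $fTf\cong R$. This is immediate from the definitions, since the multiplication in $UT_2(R)$ is exactly the formal triangular multiplication with $M=R$.

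Applying the upper bound in Theorem~\ref{thm:fPD-triangular} then gives
\[
\fPD\bigl(UT_2(R)\bigr)\le \max\{\fPD(R)+1,\ \fPD(R)+1\}=\fPD(R)+1.
\]
If $\fPD(R)=\infty$, the inequality is trivial, so no separate case analysis is needed.

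I do not expect a genuine obstacle. The only point needing care is that Theorem~\ref{thm:fPD-triangular} and its supporting lemmas (Lemmas~\ref{lem:corner-FPR}, \ref{lem:pd-lower-R}, \ref{lem:pd-lower-S}) are stated for arbitrary associative rings and left modules, and I would simply invoke them in that form. As a remark, the lower bound of the same theorem also yields $\fPD(R)\le\fPD(UT_2(R))$, although this is not required for the corollary.
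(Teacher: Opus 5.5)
Your proposal is correct and is exactly the paper's proof. Set $S=R$ and $M={}_RR_R$. Both ${}_RM$ and $M_R$ are free of rank one, so Hypothesis~\ref{hyp:triangular} holds, and the upper bound $\max\{\fPD(R)+1,\fPD(R)+1\}$ of Theorem~\ref{thm:fPD-triangular} gives the claim.

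One caution about invoking the supporting lemmas ``in that form''. The finite generation of $eP$ asserted in Lemma~\ref{lem:corner-FPR} actually requires ${}_RM$ to be finitely generated; for instance, $e(Tf)=eTf\cong M$. This is automatic here since $M=R$, so your specialization to $UT_2(R)$ is safe.
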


\begin{proof}
Take $S=R$ and $M=R$, viewed as an $(R,R)$-bimodule in the natural way. Then
$M_R$ is free of rank one, hence projective, and ${}_RM$ is also free of rank one,
hence projective. Therefore Hypothesis~\ref{hyp:triangular} is satisfied. By
Theorem~\ref{thm:fPD-triangular}, we obtain
\[
\fPD\bigl(UT_2(R)\bigr)
\le
\max\{\fPD(R)+1,\fPD(R)+1\}
=
\fPD(R)+1.  \qedhere
\]
\end{proof}

\begin{corollary}\label{cor:diagonal-dominates}
Assume Hypothesis~\ref{hyp:triangular} and suppose that
$\fPD(R)\ge \fPD(S)+1.$
Then
$\fPD(R)\le \fPD(T)\le \fPD(R)+1.$
\end{corollary}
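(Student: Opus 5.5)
This corollary follows directly from Theorem~\ref{thm:fPD-triangular}, with no new homological input. I will compare the two lower and upper bounds supplied there with the extra hypothesis $\fPD(R)\ge \fPD(S)+1$.

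First, the hypothesis forces $\fPD(R)\ge 1$ and $\fPD(S)\le \fPD(R)-1$. This holds under the usual conventions with $\infty$: if $\fPD(R)=\infty$, every asserted inequality is trivial, since the lower bound from Lemma~\ref{lem:pd-lower-R} gives $\fPD(T)\ge\infty$. So I may assume $\fPD(R)<\infty$, and then $\fPD(S)<\infty$ as well.

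For the lower bound, Lemma~\ref{lem:pd-lower-R} (equivalently, the left half of Theorem~\ref{thm:fPD-triangular}) gives $\fPD(T)\ge \max\{\fPD(R),\fPD(S)\}=\fPD(R)$. The equality holds because $\fPD(S)<\fPD(R)$.

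For the upper bound, Theorem~\ref{thm:fPD-triangular} gives
\[
\fPD(T)\le \max\{\fPD(R)+1,\ \fPD(S)+1\}.
\]
By hypothesis, $\fPD(S)+1\le \fPD(R)<\fPD(R)+1$, so the maximum is $\fPD(R)+1$. Combining the two bounds yields $\fPD(R)\le\fPD(T)\le\fPD(R)+1$.

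The only point needing care is handling the infinite case cleanly. There is no genuine obstacle; everything else is substitution into the already-established inequality.
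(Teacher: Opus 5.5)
Your proposal is correct and is essentially the paper's proof: the lower bound comes from Lemma~\ref{lem:pd-lower-R}, the upper bound from Theorem~\ref{thm:fPD-triangular}, and the hypothesis $\fPD(S)+1\le\fPD(R)$ is used only to identify $\max\{\fPD(R)+1,\fPD(S)+1\}$ as $\fPD(R)+1$. Your separate treatment of the case $\fPD(R)=\infty$ adds a little care that the paper leaves implicit.
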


\begin{proof}
By Lemma~\ref{lem:pd-lower-R}, we have
$\fPD(T)\ge \fPD(R).$
On the other hand, by Theorem~\ref{thm:fPD-triangular},
\[
\fPD(T)\le \max\{\fPD(R)+1,\ \fPD(S)+1\}.
\]
Since
$\fPD(R)\ge \fPD(S)+1,$
it follows that
\[
\fPD(S)+1\le \fPD(R)\le \fPD(R)+1.
\]
Hence
$\fPD(T)\le \fPD(R)+1.$
Therefore
\[
\fPD(R)\le \fPD(T)\le \fPD(R)+1.  \qedhere
\]
\end{proof}

\begin{corollary}\label{cor:finiteness-ascends}
Assume Hypothesis~\ref{hyp:triangular}. If $\fPD(R)<\infty$ and $\fPD(S)<\infty$, then
\[
\fPD(T)<\infty.
\]
\end{corollary}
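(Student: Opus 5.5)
This is immediate from the upper bound in Theorem~\ref{thm:fPD-triangular}, so the plan is simply to invoke that theorem under Hypothesis~\ref{hyp:triangular}. It gives
\[
\fPD(T)\le \max\{\fPD(R)+1,\ \fPD(S)+1\}.
\]
If $\fPD(R)$ and $\fPD(S)$ are both finite, then the right-hand side is a maximum of two finite integers. Hence $\fPD(T)$ is bounded by a finite number and is therefore finite.

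For completeness I would recall where that upper bound came from, since it is the only substantive input. Take any $N=(A,B,\varphi)\in\FPR(T)$. Lemma~\ref{lem:corner-FPR} gives $A\in\FPR(R)$ and $B\in\FPR(S)$, so $\pd_R A\le \fPD(R)$ and $\pd_S B\le\fPD(S)$. The exact functors $Te\otimes_R-$ and $Tf\otimes_S-$, together with the short exact sequences of Lemma~\ref{lem:T-mod-SES}, then bound $\pd_T N$ by $\max\{\fPD(R),\fPD(S)\}+1$.

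The only point I would double-check is that the finitistic bounds on the corners actually apply, i.e.\ that $A$ and $B$ lie in $\FPR$ of the corner rings. This is exactly what Lemma~\ref{lem:corner-FPR} provides, using projectivity of ${}_RM$, so there is no real obstacle. The corollary is thus a one-line consequence of the final assertion of Theorem~\ref{thm:fPD-triangular}.
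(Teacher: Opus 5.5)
Your proposal is correct and follows the paper's own argument: the corollary is read off from the upper bound $\fPD(T)\le\max\{\fPD(R)+1,\fPD(S)+1\}$ of Theorem~\ref{thm:fPD-triangular}, and that bound is finite when both $\fPD(R)$ and $\fPD(S)$ are. Your recap of where the bound comes from also matches the paper: Lemma~\ref{lem:corner-FPR} gives $eN\in\FPR(R)$ and $fN\in\FPR(S)$, and the sequences of Lemma~\ref{lem:T-mod-SES} finish the estimate, with the $S$-side step also using projectivity of $M_S$.
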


\begin{proof}
The conclusion follows immediately from Theorem~\ref{thm:fPD-triangular}, since
\[
\fPD(T)\le \max\{\fPD(R)+1,\ \fPD(S)+1\}.  \qedhere
\]
\end{proof}

\begin{corollary}\label{cor:UTn}
Let $R$ be a ring, and let $UT_n(R)$ be the ring of all $n\times n$ upper triangular
matrices over $R$. Then
\[
\fPD\bigl(UT_n(R)\bigr)\le \fPD(R)+(n-1).
\]
\end{corollary}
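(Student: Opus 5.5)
We argue by induction on $n$, using the block decomposition of $UT_n(R)$ as a triangular matrix ring together with Theorem~\ref{thm:fPD-triangular}. For $n=1$ there is nothing to prove, since $UT_1(R)=R$. For $n=2$ the claim is exactly Corollary~\ref{cor:UT2}. For $n\ge 2$ we write
\[
UT_n(R)\cong\begin{pmatrix} R & M\\ 0 & UT_{n-1}(R)\end{pmatrix},
\qquad M=R^{1\times(n-1)},
\]
which separates the first row and first column. Here $M$ is the $(R,UT_{n-1}(R))$-bimodule of row vectors: $R$ acts on the left by scalar multiplication, and $UT_{n-1}(R)$ acts on the right by matrix multiplication. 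One checks directly that block multiplication reproduces the multiplication of $UT_n(R)$.

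Next we verify Hypothesis~\ref{hyp:triangular} for this decomposition. As a left $R$-module, $M\cong R^{n-1}$ is free, so ${}_RM$ is projective. For the right module, the row space $M=R^{1\times(n-1)}$ is isomorphic to $e_{11}UT_{n-1}(R)$, the first row of $UT_{n-1}(R)$, where $e_{11}$ is the matrix unit. Since $e_{11}$ is idempotent, $e_{11}UT_{n-1}(R)$ is a direct summand of $UT_{n-1}(R)_{UT_{n-1}(R)}$, and hence $M_{UT_{n-1}(R)}$ is projective. This identification of $M$ with a principal right ideal generated by an idempotent is the only point needing care. The rest is bookkeeping.

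With the hypothesis in hand, Theorem~\ref{thm:fPD-triangular} gives
\[
\fPD\bigl(UT_n(R)\bigr)\le\max\bigl\{\fPD(R)+1,\ \fPD(UT_{n-1}(R))+1\bigr\}.
\]
By the induction hypothesis, $\fPD(UT_{n-1}(R))\le\fPD(R)+(n-2)$. Substituting, the maximum is at most $\max\{\fPD(R)+1,\ \fPD(R)+(n-1)\}=\fPD(R)+(n-1)$, because $n\ge 2$. If $\fPD(R)=\infty$ the inequality holds trivially.
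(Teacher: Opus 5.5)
Your proof is correct. It is the paper's argument up to a mirror image: you split off the first row and column, $UT_n(R)\cong\begin{pmatrix}R&R^{1\times(n-1)}\\0&UT_{n-1}(R)\end{pmatrix}$. The paper splits off the last column instead, $UT_n(R)\cong\begin{pmatrix}U&Ue\\0&R\end{pmatrix}$ with $U=UT_{n-1}(R)$ and $e=e_{n-1,n-1}$. So the two sides swap roles when checking Hypothesis~\ref{hyp:triangular}: for you the left module is free and the right module is the idempotent summand $e_{11}U$, while in the paper the left module is the summand $Ue$ and the right module is free of rank $n-1$.

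Since the bound in Theorem~\ref{thm:fPD-triangular} is symmetric in the two diagonal corners, both orientations give the same bound $\fPD(R)+(n-1)$, so for the corollary as stated nothing is gained either way. The orientation does matter once you notice that the proof of that theorem actually yields $\fPD(T)\le\max\{\fPD(R),\fPD(S)+1\}$. For the middle term of $0\to(A,0,0)\to N\to(0,B,0)\to 0$ one has $\pd_T N\le\max\{\pd_T(A,0,0),\pd_T(0,B,0)\}$, and only the $(0,B,0)$ term costs a $+1$. With $R$ in the lower-right corner, as in the paper, this gives $\fPD(UT_n(R))\le\max\{\fPD(UT_{n-1}(R)),\fPD(R)+1\}$, hence $\fPD(UT_n(R))\le\fPD(R)+1$ for all $n\ge 2$. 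Your recursion, $\max\{\fPD(R),\fPD(UT_{n-1}(R))+1\}$, still only produces $\fPD(R)+(n-1)$.
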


\begin{proof}
We argue by induction on $n$.

If $n=1$, then $UT_1(R)=R$, and the statement is trivial.

Now let $n\ge 2$, and set
$U:=UT_{n-1}(R).$
Then there is a ring isomorphism
\[
UT_n(R)\cong
\begin{pmatrix}
U & Ue\\
0 & R
\end{pmatrix},
\]
where $e=e_{\,n-1,n-1}\in U$ is the matrix unit in the \((n-1,n-1)\)-position.
Indeed, the upper-right corner consists of column vectors of length \(n-1\), which
identifies naturally with the \((U,R)\)-bimodule \(Ue\).

We verify Hypothesis~\ref{hyp:triangular} for the bimodule \(M:=Ue\).

First, as a left \(U\)-module, \(Ue\) is projective, since \(e\) is an idempotent and
\(Ue\) is a direct summand of the regular left \(U\)-module \(U\).

Second, as a right \(R\)-module, \(Ue\) is free of rank \(n-1\). Indeed, the last column
of a matrix in \(UT_{n-1}(R)\) can be chosen arbitrarily, so
$Ue\cong R^{\,n-1}$
as a right \(R\)-module. In particular, \(Ue_R\) is projective.

Therefore Hypothesis~\ref{hyp:triangular} is satisfied, and
Theorem~\ref{thm:fPD-triangular} yields
\[
\fPD\bigl(UT_n(R)\bigr)
\le
\max\{\fPD(U)+1,\ \fPD(R)+1\}.
\]
By the induction hypothesis,
\[
\fPD(U)=\fPD\bigl(UT_{n-1}(R)\bigr)\le \fPD(R)+(n-2).
\]
Hence
\[
\fPD\bigl(UT_n(R)\bigr)
\le
\max\{\fPD(R)+(n-2)+1,\ \fPD(R)+1\}
=
\fPD(R)+(n-1).
\]
This completes the induction.
\end{proof}

\noindent\textbf{Funding.}
This work was supported by the National Social Science Fund of China (No.~23EJY229),
the Open Fund of the Key Laboratory of Intelligent Optoelectronic Sensing Systems and
Applications in Sichuan Provincial Universities (No.~ZNGD2410), the 2023 Second-Batch
High-Level Talent Research Start-Up Program of Sichuan University of Arts and Science
(\emph{Finitisticization of Multiplicative Ideal Theory}, No.~2023RC009Z), and the
Mathematics and Finance Research Center Project of Sichuan University of Arts and Science
(No.~SCMF202516).  H. Kim was supported by Basic Science Research Program through the National Research Foundation of Korea(NRF) funded by the Ministry of Education (2021R1I1A3047469).

\medskip

\paragraph{{\bf Data availability:}}
 This manuscript has no associate data.

\medskip

\paragraph{{\bf Conflict of interest:}}
 The authors have no conflict of interest.

\end{document}